\providecommand{\algorithmname}{Algorithm}
\numberwithin{equation}{section}
\numberwithin{figure}{section}
\theoremstyle{plain}
\newtheorem{thm}{\protect\theoremname}[section]
\theoremstyle{plain}
\newtheorem{lyxalgorithm}[thm]{\protect\algorithmname}
\theoremstyle{plain}
\newtheorem{assumption}[thm]{\protect\assumptionname}
\theoremstyle{plain}
\newtheorem{prop}[thm]{\protect\propositionname}
\theoremstyle{plain}
\newtheorem{lem}[thm]{\protect\lemmaname}
\theoremstyle{definition}
\newtheorem{example}[thm]{\protect\examplename}
\theoremstyle{definition}
\newtheorem{defn}[thm]{\protect\definitionname}
\theoremstyle{remark}
\newtheorem{rem}[thm]{\protect\remarkname}
\newcommand{\dom}{\mbox{\rm dom}}
\providecommand{\algorithmname}{Algorithm}
\providecommand{\assumptionname}{Assumption}
\providecommand{\definitionname}{Definition}
\providecommand{\examplename}{Example}
\providecommand{\lemmaname}{Lemma}
\providecommand{\propositionname}{Proposition}
\providecommand{\remarkname}{Remark}
\providecommand{\theoremname}{Theorem}
\begin{document}
\title[Dykstra's algorithm: Level sets]{Convergence rate of distributed Dykstra's algorithm with sets defined as level sets of convex functions} 

\subjclass[2010]{68W15, 65K05, 90C25, 90C30}
\begin{abstract}
We investigate the convergence rate of the distributed Dykstra's algorithm
when some of the sets are defined as the level sets of convex functions.
We carry out numerical experiments to compare with the theoretical
results obtained.
\end{abstract}

\author{C.H. Jeffrey Pang}

\thanks{C.H.J. Pang acknowledges grant R-146-000-265-114 from the Faculty
of Science, National University of Singapore. }

\curraddr{Department of Mathematics\\ 
National University of Singapore\\ 
Block S17 08-11\\ 
10 Lower Kent Ridge Road\\ 
Singapore 119076 }

\email{matpchj@nus.edu.sg}

\date{\today{}}

\keywords{Distributed optimization, level sets, Dykstra's algorithm}

\maketitle
\tableofcontents{}

\section{Introduction}

Let $X$ be a finite dimensional Hilbert space. For a finite set $V$,
consider the problem 
\begin{equation}
\begin{array}{c}
\underset{x\in X}{\min}\underset{i\in V}{\overset{\phantom{i\in V}}{\sum}}\delta_{C_{i}}(x)+\frac{1}{2}\|x-\bar{x}\|^{2},\end{array}\label{eq:orig-primal}
\end{equation}
where $\delta_{C_{i}}(\cdot)$ is the indicator function of the set
$C_{i}$ defined by 
\begin{equation}
C_{i}:=\{x:g_{i}(x)\leq0\}\label{eq:level-set-form}
\end{equation}
for some closed convex subdifferentiable function $g_{i}:X\to\mathbb{R}$
with full domain. If $C_{i}$ were sets that are easy to project onto
rather than through \eqref{eq:level-set-form}, then Dykstra's algorithm
\cite{Dykstra83} is one way to solve problem \eqref{eq:orig-primal}.
It was recognized in \cite{Han88} that Dykstra's algorithm is block
coordinate ascent on the dual. We prefer to call it Dykstra's algorithm
because the Boyle-Dykstra theorem \cite{BD86} shows the convergence
to the primal minimizer even if a dual maximizer is absent. (In \cite{Han88}
and most other papers on block coordinate methods, a dual maximizer
is assumed to exist, with a constraint qualification or otherwise.)
The proof in \cite{BD86} is rewritten in the form of mathematical
programming in \cite{Gaffke_Mathar}. 

Solving \eqref{eq:orig-primal} directly may not be easy to do if
only function values and a subgradient of $g(\cdot)$ is easy to obtain
in each iteration. As was discussed in \cite{Combettes_SICON_2000,BCRZ03},
an iterative method to find the minimizer of \eqref{eq:orig-primal}
is to project onto outer approximates 
\begin{equation}
\{x:g_{i}(\tilde{x})+\langle\tilde{s},x-\tilde{x}\rangle\leq0\}\label{eq:subgrad_approx}
\end{equation}
 of $C_{i}$, where $\tilde{x}$ is some point in $X$. Halfspaces
like \eqref{eq:subgrad_approx} are easier to project onto than $C_{i}$
itself. The method proposed in \cite{Combettes_SICON_2000} shares
more similarity with Haugazeau's algorithm \cite{Haugazeau68}. 

In \cite{BCRZ03}, the authors extend Dykstra's algorithm while projecting
onto supersets of $C_{i}$ (not necessarily of the form \eqref{eq:subgrad_approx}),
showing the convergence to a primal minimizer under a constraint qualification.

In a series of papers \cite{Pang_Dyk_spl,Pang_Dist_Dyk,Pang_sub_Dyk},
we showed that extending Dykstra's algorithm leads to a distributed
optimization algorithm for problems of the form 
\[
\begin{array}{c}
\underset{x\in X_{i}}{\min}\underset{i\in V}{\sum}[f_{i}(x)+\frac{1}{2}\|x-\bar{x}_{i}\|^{2}],\end{array}
\]
where $X_{i}$ are finite dimensional Hilbert space, and $f_{i}:X_{i}\to\mathbb{R}\cup\{\infty\}$
are closed convex functions that are either proximable, or subdifferentiable
with full domain. The algorithm, which we call the distributed Dykstra's
algorithm, has many favorable properties. Such properties include
being distributed, asynchronous, decentralized (similar to the special
case of the averaged consensus problem), and having deterministic
convergence rates. Other properties include being applicable for time-varying
graphs, allow partial communication of data (so that computations
are not limited by communication speeds), and having convergence rates
compatible with various first order algorithms for various special
cases. 

\subsection{Contributions of this paper}

It appears that the convergence rates of Dykstra's algorithm for \eqref{eq:orig-primal}
has not been studied. In this paper, we study the convergence rate
of the distributed Dykstra's algorithm when the outer approximates
of the form \eqref{eq:subgrad_approx} are used. We show that our
algorithm has $O(1/k)$ convergence (for the dual objective value)
for the case when $|V|=1$ in \eqref{eq:orig-primal}, and $O(1/k^{1/3})$
convergence for the distributed Dykstra's algorithm. We also perform
numerical experiments to compare the theoretical rates obtained.

\subsection{Notation}

Throughout this paper, we assume that the Hilbert spaces are finite
dimensional. We denote $P_{C}(x)$ to be the projection of $x$ onto
the set $C$. Other notations are standard in convex analysis. 

\section{The case of one set}

In this section, we work on the case \eqref{eq:orig-primal} when
$|V|=1$.  The primal problem and its (Fenchel) dual are 
\begin{equation}
\begin{array}{c}
(P)\,\,\underset{x\in X}{\overset{\phantom{x\in\mathbb{R}^{m}}}{\min}}\frac{1}{2}\|x-\bar{x}\|^{2}+f(x),\,\,\text{ and }\,\,(D)\,\,\underset{z\in X}{\max}\frac{1}{2}\|\bar{x}\|^{2}-\frac{1}{2}\|z-\bar{x}\|^{2}-f^{*}(z),\end{array}\label{eq:p-d-1-case}
\end{equation}
where $f(\cdot)=\delta_{\{x:g(x)\leq0\}}(\cdot)$ and $g:X\to\mathbb{R}$
is a subdifferentiable function with full domain. Strong duality holds
for \eqref{eq:p-d-1-case}. We now look at the basic algorithm in
Algorithm \vref{eq:p-d-1-case}. Note the similarities of the Algorithm
\ref{alg:basic} to Haugazeau's algorithm \cite{Haugazeau68}; See
also \cite{BauschkeCombettes11}. We make the following assumption
on $g(\cdot)$.

\begin{algorithm}[h]
\begin{lyxalgorithm}
\label{alg:basic}This algorithm finds iterates $\{x_{k}\}_{k}$ that
converges to the solution of \eqref{eq:p-d-1-case}.

Set $x_{0}=\bar{x}$.

Set $H_{0}=\{x:c_{0}^{T}x\leq b\}$ so that $C\subset H_{0}$. (Note:
$c_{0}$ and $b$ can be chosen to be $0$)

For $k=0,1,\dots$

$\quad$Find $\tilde{C}_{k}$ such that $C\subset\tilde{C}_{k}$ and
$x_{k}\notin\tilde{C}_{k}$. A typical choice is 
\begin{equation}
\tilde{C}_{k}=\{x:g(x_{k})+\langle x-x_{k},s_{k}\rangle\leq0\}\text{ for some }s_{k}\in\partial g(x_{k}).\label{eq:tilde_C_choice}
\end{equation}

$\quad$Let $x_{k+1}=P_{H_{k}\cap\tilde{C}_{k}}(\bar{x})$. 

$\quad$Let $H_{k+1}$ be the halfspace such that $x_{k+1}=P_{H_{k+1}}(\bar{x})$. 

End For
\end{lyxalgorithm}

\end{algorithm}

\begin{assumption}
\label{assu:g-negative}Suppose that $g:X\to\mathbb{R}$ is a subdifferentiable
function with full domain and $\min_{x\in X}g(x)<0$. 
\end{assumption}

If $\min_{x}g(x)>0$, then the problem is infeasible. If $\min_{x}g(x)=0$,
then note that a slight perturbation of $g(\cdot)$ would render the
problem infeasible. See \cite[Theorem 9.41(b)]{RW98} for more connections
to stability. So this assumption ensures the stability of the problem. 
\begin{prop}
\label{prop:subgrad-bdd}Suppose Assumption \ref{assu:g-negative}
holds and let $R$ be a bounded set. Then there is some $c>0$ such
that if $x\in R$, $g(x)\geq0$ and $s\in\partial g(x)$, then $\|s\|>c$. 
\end{prop}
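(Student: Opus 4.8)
The plan is to exploit convexity through the subgradient inequality, anchored at a strictly feasible point whose existence Assumption~\ref{assu:g-negative} supplies. First I would use $\min_{x\in X}g(x)<0$ to fix a point $x^{*}$ with $g(x^{*})<0$, and set $\alpha:=-g(x^{*})>0$. This $\alpha$ quantifies how far below zero $g$ dips, and it is precisely the number that will keep subgradients bounded away from the origin.

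Next, take any $x\in R$ with $g(x)\geq0$ and any $s\in\partial g(x)$ (the subdifferential is nonempty because $g$ is subdifferentiable with full domain). The defining subgradient inequality $g(y)\geq g(x)+\langle s,y-x\rangle$, evaluated at $y=x^{*}$, gives
\[
g(x^{*})\geq g(x)+\langle s,x^{*}-x\rangle\geq\langle s,x^{*}-x\rangle,
\]
using $g(x)\geq0$. Rearranging and recalling $g(x^{*})=-\alpha$ yields $\langle s,x-x^{*}\rangle\geq\alpha>0$.

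Then I would feed this lower bound into the Cauchy--Schwarz inequality: $\|s\|\,\|x-x^{*}\|\geq\langle s,x-x^{*}\rangle\geq\alpha$. Since $R$ is bounded, there is $\rho>0$ with $\|x\|\leq\rho$ for all $x\in R$, so that $\|x-x^{*}\|\leq\rho+\|x^{*}\|=:M$ uniformly over $R$. Hence $\|s\|\geq\alpha/M$, and choosing $c:=\alpha/(2M)$ (any constant strictly below $\alpha/M$ works) gives $\|s\|>c$, as claimed.

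The argument is short, and I do not expect a genuine obstacle; the only point needing care is the uniformity of the constant $M$, which is exactly where boundedness of $R$ enters. Without it, points $x$ with $g(x)\geq0$ could recede to infinity and let $\|s\|$ shrink to $0$. Geometrically, the proposition records that at any point where $g$ is nonnegative the function must still be able to decrease to the negative value $g(x^{*})$ across the bounded distance to $x^{*}$, which is impossible unless the local slope $\|s\|$ is bounded below.
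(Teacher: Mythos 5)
Your proof is correct, but it takes a genuinely different route from the paper's. The paper argues by contradiction and compactness: it supposes a sequence $(x_{i},s_{i})$ with $x_{i}\in R$, $g(x_{i})\geq0$, $s_{i}\in\partial g(x_{i})$ and $\|s_{i}\|\to0$, extracts a limit point $\hat{x}$ of the bounded sequence $\{x_{i}\}$, and invokes outer semicontinuity of the subdifferential mapping to conclude $0\in\partial g(\hat{x})$ with $g(\hat{x})\geq0$, contradicting $\min_{x}g(x)<0$. You instead give a direct, quantitative argument: anchoring the subgradient inequality at a strictly feasible point $x^{*}$ with $g(x^{*})=-\alpha<0$ yields $\langle s,x-x^{*}\rangle\geq\alpha$, and Cauchy--Schwarz together with the uniform bound $\|x-x^{*}\|\leq M$ over the bounded set $R$ gives $\|s\|\geq\alpha/M$. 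Your approach buys an explicit constant $c=\alpha/(2M)$ (the paper's $c$ is non-constructive), avoids any appeal to limit points or closedness of the graph of $\partial g$, and would work verbatim in an infinite-dimensional Hilbert space, where bounded sequences need not have norm-convergent subsequences; the paper's argument is shorter to write given that outer semicontinuity of $\partial g$ is standard machinery in the finite-dimensional setting it assumes throughout. Both proofs are sound.
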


\begin{proof}
Seeking a contradiction, suppose $(x_{i},s_{i})$ satisfies the conditions
and $\lim_{i\to\infty}\|s_{i}\|=0$, $g(x_{i})\geq0$. Let $\hat{x}$
be $\lim_{i\to\infty}x_{i}$. By the outer semicontinuity of the subgradient
mapping, $0\in\partial g(\hat{x})$ and $g(\hat{x})\geq0$, which
contradicts Assumption \ref{assu:g-negative}.
\end{proof}
\begin{lem}
\label{lem:3_pts}Let $X$ be a finite dimensional Hilbert space,
and let $g:X\to\mathbb{R}$ be a subdifferentiable function with full
domain satisfying Assumption \ref{assu:g-negative}. Let $C:=\{x:g(x)\leq0\}$,
and let $R$ be a bounded set. Let $\bar{x}\in R$. Let $\hat{x}=P_{C}(\bar{x})$,
let $H$ be a halfspace such that $C\subset H$, and let $x=P_{H}(x)$.
Then the following hold:
\begin{enumerate}
\item $\bar{x}-\hat{x}\in N_{C}(\hat{x})$.
\item There is some constant $\gamma>0$ such that if $g(x)\geq0$ and $s\in\partial g(x)$,
then $\|s\|\geq\gamma$. 
\item Let $\hat{H}$ be the halfspace $\{x:\langle\bar{x}-\hat{x},x-\hat{x}\rangle\geq0\}$.
Then for the constant $\gamma>0$ in (2), $g(x)\geq\gamma d(x,\hat{H})$. 
\item Let $\tilde{R}$ be a compact set. Let $\tilde{\gamma}:=\sup\{\|\tilde{s}\|:\tilde{s}\in\partial g(\tilde{x}),\tilde{x}\in\tilde{R}\}$,
which is finite from the fact that $\dom(g)=X$. For $\tilde{x}$
such that $g(\tilde{x})>0$, let $\tilde{H}$ be the halfspace $\tilde{H}:=\{x:g(\tilde{x})+\langle x-\tilde{x},\tilde{s}\rangle\leq0\}$.
Then $d(\tilde{x},\tilde{H})=g(\tilde{x})/\|\tilde{s}\|$. Moreover,
if $\tilde{x}\in\tilde{R}$, we have $d(\tilde{x},\tilde{H})\geq g(\tilde{x})/\tilde{\gamma}>0$. 
\end{enumerate}
\end{lem}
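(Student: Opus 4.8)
The plan is to handle the four items essentially in the order stated, doing almost all of the real work in (3). Item (1) is immediate: since $C$ is closed and convex and $\hat{x}=P_{C}(\bar{x})$, the variational characterization of the metric projection gives $\langle\bar{x}-\hat{x},c-\hat{x}\rangle\leq0$ for every $c\in C$, which is by definition the statement $\bar{x}-\hat{x}\in N_{C}(\hat{x})$. For item (2) I would not argue from scratch but reduce to Proposition \ref{prop:subgrad-bdd}. All the points that the lemma actually uses -- namely $\hat{x}$, the point $x=P_{H}(\bar{x})$, and (in the algorithm) the iterates -- lie in a fixed bounded region: because $C\subset H$, projecting onto the larger set $H$ cannot move further away, so $\|x-\bar{x}\|\leq\|\hat{x}-\bar{x}\|=d(\bar{x},C)$, and this is uniformly bounded as $\bar{x}$ ranges over the bounded set $R$. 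Enlarging $R$ to a bounded set $R'$ containing all these points and applying Proposition \ref{prop:subgrad-bdd} to $R'$ produces the constant $\gamma>0$.

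Item (3) is the crux, and the idea is to convert the normal vector $\bar{x}-\hat{x}$ into a subgradient of $g$. Set $\nu:=(\bar{x}-\hat{x})/\|\bar{x}-\hat{x}\|$; the degenerate case $\bar{x}\in C$ forces $\hat{x}=\bar{x}$ and makes both sides vanish, so assume $\bar{x}\notin C$, whence $g(\hat{x})=0$. Since $\hat{x}$ minimizes $\tfrac12\|\cdot-\bar{x}\|^{2}$ over $C=\{g\leq0\}$, and Assumption \ref{assu:g-negative} supplies a Slater point ($\min g<0$), the KKT conditions hold: there are $\lambda_{0}\geq0$ and $s_{0}\in\partial g(\hat{x})$ with $\bar{x}-\hat{x}=\lambda_{0}s_{0}$ and $\lambda_{0}g(\hat{x})=0$; as $\bar{x}\neq\hat{x}$ we get $\lambda_{0}>0$, hence $s_{0}=\|s_{0}\|\nu$. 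Because $g(\hat{x})=0\geq0$, item (2) gives $\|s_{0}\|\geq\gamma$. Convexity of $g$ evaluated from $\hat{x}$ then yields, for the point $x$,
\[
g(x)\geq g(\hat{x})+\langle s_{0},x-\hat{x}\rangle=\|s_{0}\|\langle\nu,x-\hat{x}\rangle\geq\gamma\langle\nu,x-\hat{x}\rangle,
\]
where the final step uses $\langle\nu,x-\hat{x}\rangle\geq0$ together with $\|s_{0}\|\geq\gamma$. Identifying $\langle\nu,x-\hat{x}\rangle$ with $d(x,\hat{H})$ (the distance across the supporting hyperplane of $C$ at $\hat{x}$) finishes (3).

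Item (4) is a direct computation. The set $\tilde{H}=\{x:g(\tilde{x})+\langle x-\tilde{x},\tilde{s}\rangle\leq0\}$ is the $0$-sublevel set of the affine function $\phi(x)=g(\tilde{x})+\langle x-\tilde{x},\tilde{s}\rangle$, whose gradient is $\tilde{s}$ and whose value at $\tilde{x}$ is $\phi(\tilde{x})=g(\tilde{x})>0$. The elementary point-to-halfspace distance formula gives $d(\tilde{x},\tilde{H})=\phi(\tilde{x})/\|\tilde{s}\|=g(\tilde{x})/\|\tilde{s}\|$. Finally $\tilde{\gamma}$ is finite because $\dom(g)=X$ forces local boundedness of $\partial g$ on the compact set $\tilde{R}$, so for $\tilde{x}\in\tilde{R}$ we have $\|\tilde{s}\|\leq\tilde{\gamma}$ and hence $d(\tilde{x},\tilde{H})\geq g(\tilde{x})/\tilde{\gamma}>0$.

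The one genuinely delicate step is the KKT/normal-cone identification $\bar{x}-\hat{x}=\lambda_{0}s_{0}$ with $s_{0}\in\partial g(\hat{x})$ in (3): it relies on the constraint qualification furnished by Assumption \ref{assu:g-negative}, and one must keep careful track of the sign conventions so that $\langle\nu,x-\hat{x}\rangle$ is genuinely nonnegative and equals $d(x,\hat{H})$ -- this is exactly what permits the bound $\|s_{0}\|\geq\gamma$ to be applied in the correct direction. Everything else reduces to the projection inequality, a single convexity estimate, and the standard distance formula.
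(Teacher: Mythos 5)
Your proposal is correct and follows essentially the same route as the paper: (1) via the projection inequality, (2) by reducing to Proposition \ref{prop:subgrad-bdd} after noting all relevant points lie in a bounded set, (3) by producing a subgradient $\hat{s}\in\partial g(\hat{x})$ that is a positive multiple of $\bar{x}-\hat{x}$ and applying the subgradient inequality, and (4) by the standard point-to-halfspace distance formula. The only difference is that you spell out details the paper leaves implicit -- the Slater/KKT justification for the normal-cone identification and the sign bookkeeping needed to equate $\langle\nu,x-\hat{x}\rangle$ with $d(x,\hat{H})$ (which indeed requires reading $\hat{H}$ as the halfspace containing $C$) -- so no substantive gap.
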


\begin{proof}
Property (1) is obvious. We now prove (2) by contradiction. Since
$\bar{x}$ lies in the bounded set $R$, $P_{C}(\bar{x})$ also lies
in a bounded set. Since $x$ lies in the ball with center $\bar{x}$
and radius $\|\bar{x}-\hat{x}\|$, $x$ lies in a bounded set as well.
Apply Proposition \ref{prop:subgrad-bdd}.

Next, we prove (3). By the optimality conditions on $\hat{x}$, there
is some subgradient $\hat{s}\in\partial g(\hat{x})$ that is a positive
multiple of $\bar{x}-\hat{x}$. Then 
\[
g(x)\geq g(\hat{x})+\langle\hat{s},x-\hat{x}\rangle\overset{g(\hat{x})=0}{=}\|\hat{s}\|d(x,\hat{H})\overset{(2)}{\geq}\gamma d(x,\hat{H}).
\]
Lastly, (4) is elementary.
\end{proof}
\begin{lem}
\label{lem:d_sqr_increase} Let $X$ be a finite dimensional Hilbert
space, $\bar{x}\in X$, $H_{1}$ be a halfspace, and $x_{1}=P_{H_{1}}(\bar{x})$.
Let $H_{2}$ be a halfspace, and let $d=d(x_{1},H_{2})$. Let $x_{2}=P_{H_{1}\cap H_{2}}(\bar{x})$.
Then $\|\bar{x}-x_{2}\|^{2}\geq\|\bar{x}-x_{1}\|^{2}+d^{2}$. 
\end{lem}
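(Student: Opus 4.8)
The plan is to run the standard Pythagorean decomposition of the displacement $\bar x - x_2$ through the intermediate projection $x_1$, and to bound the two resulting pieces separately using the variational inequality that characterizes metric projections onto closed convex sets. Writing
\[
\bar x - x_2 = (\bar x - x_1) + (x_1 - x_2),
\]
and expanding the square gives
\[
\|\bar x - x_2\|^2 = \|\bar x - x_1\|^2 + 2\langle \bar x - x_1, x_1 - x_2\rangle + \|x_1 - x_2\|^2 .
\]
The target inequality then reduces to two claims: that the cross term is nonnegative, and that $\|x_1 - x_2\|^2 \geq d^2$.

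For the cross term, I would use that $x_1 = P_{H_1}(\bar x)$ together with the inclusion $x_2 \in H_1 \cap H_2 \subseteq H_1$. The obtuse-angle characterization of the projection onto the convex set $H_1$ yields $\langle \bar x - x_1, x_2 - x_1\rangle \leq 0$, equivalently $\langle \bar x - x_1, x_1 - x_2\rangle \geq 0$, so the middle term may simply be discarded. For the remaining term, I would observe that $x_2 \in H_1 \cap H_2 \subseteq H_2$, so $x_2$ is an admissible competitor in the infimum defining $d(x_1,H_2)$; hence $\|x_1 - x_2\| \geq d(x_1,H_2) = d$, and therefore $\|x_1 - x_2\|^2 \geq d^2$. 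Combining the three observations gives $\|\bar x - x_2\|^2 \geq \|\bar x - x_1\|^2 + d^2$, as claimed.

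There is no genuine obstacle here: the argument is elementary once the decomposition is set up, and the only point requiring care is getting the correct sign in the projection inequality. In particular, the cross-term bound must be driven by the inclusion $x_2 \in H_1$ (not $H_2$), while the distance estimate must be driven by the reverse inclusion $x_2 \in H_2$; using the two inclusions for their respective estimates is what makes the halves fit together. I would also record the implicit standing assumption that $H_1 \cap H_2 \neq \emptyset$, so that $x_2 = P_{H_1 \cap H_2}(\bar x)$ is well defined; in the intended application this is automatic, since both halfspaces contain the feasible set $C$.
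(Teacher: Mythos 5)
Your argument is correct and is essentially the paper's own proof: the same expansion of $\|\bar{x}-x_{2}\|^{2}$ about $x_{1}$, the same use of the projection inequality $\langle\bar{x}-x_{1},x_{2}-x_{1}\rangle\leq0$ from $x_{2}\in H_{1}$, and the same bound $\|x_{1}-x_{2}\|\geq d$ from $x_{2}\in H_{2}$. Your added remark about requiring $H_{1}\cap H_{2}\neq\emptyset$ is a reasonable point of care that the paper leaves implicit.
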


\begin{proof}
Since $x_{2}\in H_{1}$ and $x_{1}=P_{H_{1}}(\bar{x})$, we have
$\langle\bar{x}-x_{1},x_{2}-x_{1}\rangle\leq0$. Also, since $x_{2}\in H_{2}$,
we have $\|x_{1}-x_{2}\|\geq d$. Hence 
\[
\|\bar{x}-x_{2}\|^{2}=\|\bar{x}-x_{1}\|^{2}+\|x_{1}-x_{2}\|^{2}+2\langle\bar{x}-x_{1},x_{1}-x_{2}\rangle\geq\|\bar{x}-x_{1}\|^{2}+d^{2}.
\]
\end{proof}
The following result gives convergence rates for sequences defined
by recurrences. 
\begin{lem}
\label{lem:Beck-recur}Let $\{a_{k}\}_{k=1}^{\infty}$ be a nonnegative
sequence. 
\begin{enumerate}
\item Suppose $\{a_{k}\}_{k=1}^{\infty}$ has the recurrence $a_{k}\geq a_{k+1}+\theta a_{k}^{2}$
for some $\theta>0$. Then $\{a_{k}\}_{k}$ has a $O(1/k)$ rate of
convergence. 
\item Suppose $\{a_{k}\}_{k=1}^{\infty}$ has the recurrence $a_{k}\geq a_{k+1}+\theta a_{k+1}^{4}$
for some $\theta>0$. Then $a_{k}\leq\left(\frac{1}{a_{1}^{3}}+(k-1)3\theta\left(3\theta a_{1}^{3}+1\right)^{-1}\right)^{-1/3}$
for all $k\geq1$, which means that $\{a_{k}\}_{k}$ has a $O(1/k^{1/3})$
rate of convergence. 
\end{enumerate}
\end{lem}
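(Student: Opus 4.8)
The plan is to handle both parts by the same mechanism: first observe that each recurrence forces $\{a_k\}$ to be nonincreasing (from $a_{k+1}\le a_k-\theta a_k^2\le a_k$ in (1), and $a_{k+1}\le a_k-\theta a_{k+1}^4\le a_k$ in (2)), so that $a_k\le a_1$ for all $k$. I would then reduce to the case where every term is strictly positive: if $a_K=0$ for some $K$, monotonicity and nonnegativity force $a_k=0$ for all $k\ge K$, and the claimed rates hold trivially there. The guiding idea in both parts is to pass to a reciprocal power of $a_k$, show it increases by at least a fixed positive amount per step, and telescope.

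For (1) I would divide $a_k-a_{k+1}\ge\theta a_k^2$ by the positive quantity $a_ka_{k+1}$ and use $a_k\ge a_{k+1}$ to get $\frac{1}{a_{k+1}}-\frac{1}{a_k}=\frac{a_k-a_{k+1}}{a_ka_{k+1}}\ge\frac{\theta a_k}{a_{k+1}}\ge\theta$. Telescoping from $1$ to $k$ yields $\frac{1}{a_k}\ge\frac{1}{a_1}+(k-1)\theta$, hence $a_k\le\bigl(\tfrac{1}{a_1}+(k-1)\theta\bigr)^{-1}=O(1/k)$.

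For (2) I would apply the analogous strategy to $1/a_k^3$, which is where the substance lies. Starting from $a_k-a_{k+1}\ge\theta a_{k+1}^4$, I would factor $a_k^3-a_{k+1}^3=(a_k-a_{k+1})(a_k^2+a_ka_{k+1}+a_{k+1}^2)$ and bound the second factor below by $3a_{k+1}^2$ (using $a_k\ge a_{k+1}\ge0$), obtaining $a_k^3-a_{k+1}^3\ge 3\theta a_{k+1}^6$. Dividing by $a_k^3a_{k+1}^3$ then gives $\frac{1}{a_{k+1}^3}-\frac{1}{a_k^3}\ge\frac{3\theta a_{k+1}^3}{a_k^3}$.

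The main obstacle is that this increment still carries the ratio $a_{k+1}^3/a_k^3$, which is only bounded above by $1$ and not obviously below, reflecting the fact that the recurrence controls the decrement by $a_{k+1}^4$ rather than $a_k^4$. I expect to resolve this by a case split on the size of $a_{k+1}$. If $a_{k+1}^3\ge a_k^3/(1+3\theta a_1^3)$, then the displayed inequality already gives increment $\ge 3\theta(1+3\theta a_1^3)^{-1}$. Otherwise $\frac{1}{a_{k+1}^3}>\frac{1+3\theta a_1^3}{a_k^3}$, so $\frac{1}{a_{k+1}^3}-\frac{1}{a_k^3}>\frac{3\theta a_1^3}{a_k^3}\ge 3\theta$ (using $a_k\le a_1$), which is again at least $3\theta(1+3\theta a_1^3)^{-1}$. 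In both cases the per-step increment of $1/a_k^3$ is at least $3\theta(3\theta a_1^3+1)^{-1}$, so telescoping gives $\frac{1}{a_k^3}\ge\frac{1}{a_1^3}+(k-1)\,3\theta(3\theta a_1^3+1)^{-1}$; taking the $(-1/3)$ power yields precisely the asserted bound and the $O(1/k^{1/3})$ rate.
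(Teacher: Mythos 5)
Your proposal is correct, and it is more self-contained than what the paper offers: the paper proves neither part, deferring part (1) to Beck--Tetruashvili and Beck, and part (2) to the author's earlier work \cite{Pang_rate_D_Dyk}. Your argument for (1) is the standard reciprocal-telescoping proof and surely coincides with the cited one. For (2), the substance is exactly where you locate it: the factorization $a_k^3-a_{k+1}^3=(a_k-a_{k+1})(a_k^2+a_ka_{k+1}+a_{k+1}^2)\ge 3\theta a_{k+1}^6$ gives the increment $\tfrac{1}{a_{k+1}^3}-\tfrac{1}{a_k^3}\ge 3\theta a_{k+1}^3/a_k^3$, and the awkward ratio $a_{k+1}^3/a_k^3$ (present because the recurrence penalizes $a_{k+1}^4$ rather than $a_k^4$) is handled cleanly by your dichotomy: either the ratio is at least $(1+3\theta a_1^3)^{-1}$, or $1/a_{k+1}^3$ jumps by more than $3\theta a_1^3/a_k^3\ge 3\theta$ outright; both branches yield the uniform per-step gain $3\theta(3\theta a_1^3+1)^{-1}$. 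Telescoping then reproduces the paper's bound with exactly the stated constant, which is strong evidence that your route is equivalent to the one in the reference. Your reduction to the strictly positive case (a zero term forces all subsequent terms to vanish, where the bound is trivial) correctly disposes of the only degenerate situation. The one thing your write-up buys beyond the paper is that a reader need not chase the citations; I see no gap.
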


\begin{proof}
Part (1) was addressed in \cite[Lemma 6.2]{Beck_Tetruashvili_2013}
and \cite[Lemma 3.8]{Beck_alt_min_SIOPT_2015}. Part (2) was addressed
in \cite{Pang_rate_D_Dyk}. 
\end{proof}
We now turn to the problem we try to prove. Let $\bar{d}$ be the
distance $d(\bar{x},C)$, $\hat{x}:=P_{C}(\bar{x})$ so that $\bar{d}=\|\hat{x}-\bar{x}\|$,
and $d_{k}:=\|\bar{x}-x_{k}\|$. The objective value of \eqref{eq:p-d-1-case}
is $\frac{1}{2}\bar{d}^{2}$. Making use of Lemma \ref{lem:3_pts}(3),
we observe that 
\begin{equation}
g(x_{k})\overset{\scriptsize{\text{Lem \ref{lem:3_pts}(3)}}}{\geq}\gamma d(x_{k},\hat{H})\geq\gamma(\|\hat{x}-\bar{x}\|-\|x_{k}-\bar{x}\|)=\gamma(\bar{d}-d_{k}).\label{eq:g_x_k_ineq}
\end{equation}
Moreover, by Lemma \ref{lem:3_pts}(4), $d(x_{k},\tilde{C}_{k})\geq g(x_{k})/\tilde{\gamma}$.
We have 
\begin{equation}
\begin{array}{c}
d_{k+1}^{2}\overset{\scriptsize{\text{Lem \ref{lem:d_sqr_increase}}}}{\geq}d_{k}^{2}+d(x_{k},\tilde{C}_{k})^{2}\overset{\scriptsize{\text{Lem \ref{lem:3_pts}(4), \eqref{eq:g_x_k_ineq}}}}{\geq}d_{k}^{2}+[\frac{\gamma}{\tilde{\gamma}}]^{2}(\bar{d}-d_{k})^{2}.\end{array}\label{eq:before_v}
\end{equation}
Let $v_{k}=\bar{d}^{2}-d_{k}^{2}$. Note that the objective value
of $\min_{x\in X}\frac{1}{2}\|\bar{x}-x\|^{2}+\delta_{H_{k}}(x)$
is $\frac{1}{2}d_{k}^{2}$, while objective value of \eqref{eq:orig-primal}
with $|V|=1$ is $\frac{1}{2}\bar{d}^{2}$. In other words, $v_{k}$
is twice the gap between the actual and estimated objective values.
We have 
\begin{equation}
\begin{array}{c}
v_{k+1}\overset{\eqref{eq:before_v}}{\leq}v_{k}-\frac{\gamma^{2}}{\tilde{\gamma}^{2}}(\bar{d}-d_{k})^{2}=v_{k}-\frac{\gamma^{2}}{\tilde{\gamma}^{2}}\frac{v_{k}^{2}}{(\bar{d}+d_{k})^{2}}\leq v_{k}-\frac{\gamma^{2}}{\tilde{\gamma}^{2}}\frac{v_{k}^{2}}{2\bar{d}}.\end{array}\label{eq:1-k-recur}
\end{equation}
By Lemma \ref{lem:Beck-recur}(1), $v_{k}$ converges to zero at a
$O(1/k)$ rate. 

\section{Preliminaries from \cite{Pang_rate_D_Dyk}}

In this section, we list down the preliminaries and description of
the distributed Dykstra's algorithm studied in \cite{Pang_Dist_Dyk,Pang_sub_Dyk}.
We do not claim originality in this section, and we recall some results
useful for the subsequent proofs. 

Let $V$ and $\bar{E}$ be finite sets. Define the set $\mathbf{X}:=X_{1}\times\cdots\times X_{|V|}$,
where each $X_{i}$ is a finite dimensional Hilbert space. For each
$i\in V$, let $f_{i}:X_{i}\to\mathbb{R}\cup\{\infty\}$ be a closed
convex function, and let $\mathbf{f}_{i}:\mathbf{X}\to\mathbb{R}\cup\{\infty\}$
be defined by 
\begin{equation}
\mathbf{f}_{i}(\mathbf{x})=f_{i}([\mathbf{x}]_{i}).\label{eq:f-component}
\end{equation}
Let $\delta_{C}(\cdot)$ be the indicator function for a closed convex
set $C$. For each $\alpha\in\bar{E}$, let $H_{\alpha}\subset\mathbf{X}$
be a linear subspace, and define $\mathbf{f}_{\alpha}:\mathbf{X}\to\mathbb{R}$
by $\mathbf{f}_{\alpha}(\mathbf{x})=\delta_{H_{\alpha}}(\mathbf{x})$.
The setting for the distributed Dykstra's algorithm is 
\begin{equation}
\begin{array}{c}
\underset{\mathbf{x}\in\mathbf{X}}{\min}\frac{1}{2}\|\mathbf{x}-\bar{\mathbf{x}}\|^{2}+\underset{i\in V}{\sum}\mathbf{f}_{i}(\mathbf{x})+\underset{\alpha\in\bar{E}}{\overset{\phantom{\alpha\in\bar{E}}}{\sum}}\delta_{H_{\alpha}}(\mathbf{x}).\end{array}\label{eq:general-framework}
\end{equation}
Note that the last two sums in \eqref{eq:general-framework} can be
written as $\sum_{\alpha\in V\cup\bar{E}}\mathbf{f}_{\alpha}(\mathbf{x})$.
Typically, the hyperplanes $\{H_{\alpha}\}_{\alpha\in\bar{E}}$ are
overdetermined (see Definition \ref{def:connects} later). Partition
the set $V$ as the disjoint union \emph{$V=\cup_{i=1}^{5}V_{5}$
}so that 
\begin{itemize}
\item $f_{i}(\cdot)$ are proximable functions for all $i\in V_{1}$.
\item $f_{i}(\cdot)$ are indicator functions of closed convex sets for
all $i\in V_{2}$. 
\item $f_{i}(\cdot)$ are proximable functions such that $\dom(f_{i})=X_{i}$
for all $i\in V_{3}$.
\item \emph{$f_{i}(\cdot)$ }are subdifferentiable functions (i.e., a subgradient
is easy to obtain) such that $\dom(f_{i})=X_{i}$ for all $i\in V_{4}$
and $f_{i}(\cdot)$ have Lipschitz constant $L$. 
\item $f_{i}(\cdot)$ are indicator functions $\delta_{C_{i}}(\cdot)$,
where $C_{i}:=\{x:g_{i}(x)\leq0\}$, and $g_{i}:X_{i}\to\mathbb{R}$
is a closed convex subdifferentiable function with $\dom(g_{i})=X_{i}$
for all $i\in V_{5}$.
\end{itemize}
The (Fenchel) dual of \eqref{eq:general-framework} can be found to
be 
\begin{equation}
\max_{\mathbf{z}_{\alpha}\in\mathbf{X}:\alpha\in V\cup\bar{E}}F(\{\mathbf{z}_{\alpha}\}_{\alpha\in\bar{E}\cup V}),\label{eq:general-dual}
\end{equation}
where
\begin{equation}
\begin{array}{c}
F(\{\mathbf{z}_{\alpha}\}_{\alpha\in\bar{E}\cup V}):=-\frac{1}{2}\bigg\|\bar{\mathbf{x}}-\underset{\alpha\in\bar{E}\cup V}{\sum}\mathbf{z}_{\alpha}\bigg\|^{2}+\frac{1}{2}\|\bar{\mathbf{x}}\|^{2}-\underset{\alpha\in\bar{E}\cup V}{\sum}\mathbf{f}_{\alpha}^{*}(\mathbf{z}_{\alpha}).\end{array}\label{eq:h-def}
\end{equation}
We now explain that the problem \eqref{eq:general-framework} includes
the general case of the distributed Dykstra's algorithm in \cite{Pang_Dist_Dyk,Pang_sub_Dyk}.
\begin{example}
\label{exa:distrib-dyk} \cite{Pang_Dist_Dyk,Pang_sub_Dyk}(Distributed
Dykstra's algorithm is a special case of \eqref{eq:general-framework})
Let $G=(V,E)$ be an undirected connected graph. Suppose each $X_{i}=\mathbb{R}^{m}$
for all $i\in V$, and let $\bar{E}:=E\times\{1,\dots,m\}$. For each
$\mathbf{x}\in\mathbf{X}=[\mathbb{R}^{m}]^{|V|}$, we let $[\mathbf{x}]_{i}\in\mathbb{R}^{m}$
be the $i$-th component, and we let $[[\mathbf{x}]_{i}]_{k}$ be
the $k$-th component of $[\mathbf{x}]_{i}$. For each $((i,j),k)\in\bar{E}$,
let the linear subspace $H_{((i,j),k)}\subset\mathbf{X}$ of codimension
1 be defined to be 
\begin{equation}
H_{((i,j),k)}:=\{\mathbf{x}\in\mathbf{X}:[[\mathbf{x}]_{i}]_{k}=[[\mathbf{x}]_{j}]_{k}\}.\label{eq:H-alpha-subspaces}
\end{equation}
Then the problem \eqref{eq:general-framework} is equivalent to 
\begin{equation}
\begin{array}{c}
\underset{x\in\mathbb{R}^{m}}{\min}\underset{i\in V}{\overset{\phantom{i\in V}}{\sum}}[\frac{1}{2}\|x-[\bar{\mathbf{x}}]_{i}\|^{2}+f_{i}(x)].\end{array}\label{eq:distrib-dyk-primal-pblm}
\end{equation}
\end{example}

For all $n\geq1$ and $w\in\{1,\dots,\bar{w}\}$, define $\mathbf{f}_{\alpha,n,w}:\mathbf{X}\to\mathbb{R}\cup\{\infty\}$
by \begin{subequations}\label{eq_m:h-a-n-w} 
\begin{eqnarray}
\mathbf{f}_{\alpha,n,w}(\cdot) & = & \mathbf{f}_{\alpha}(\cdot)\mbox{ for all }\alpha\in[\bar{E}\cup V]\backslash[V_{4}\cup V_{5}]\label{eq:h-a-n-w-eq-h-a}\\
\mbox{ and }\mathbf{f}_{\alpha,n,w}(\cdot) & \leq & \mathbf{f}_{\alpha}(\cdot)\mbox{ for all }\alpha\in V_{4}\cup V_{5}.\label{eq:h-a-n-w-lesser}
\end{eqnarray}
\end{subequations}For $i\in V_{4}$, the $f_{i,n,w}(\cdot)\leq f_{i}(\cdot)$
are obtained by taking affine minorants of $f_{i}(\cdot)$, as discussed
in \cite{Pang_sub_Dyk,Pang_rate_D_Dyk}, and then $\mathbf{f}_{i,n,w}(\mathbf{x})=f_{i,n,w}([\mathbf{x}]_{i})$.
For $i\in V_{5}$, define $g_{i,n,w}:X_{i}\to\mathbb{R}$ by taking
affine minorants of $g_{i}(\cdot)$ so that $g_{i,n,w}(\cdot)\leq g_{i}(\cdot)$.
Define $f_{i,n,w}:X_{i}\to\mathbb{R}\cup\{\infty\}$ by $f_{i,n,w}(\cdot):=\delta_{\{x':g_{i,n,w}(x')\leq0\}}(\cdot)$
so that 
\[
f_{i,n,w}(x)=\delta_{\{x':g_{i,n,w}(x')\leq0\}}(x)\leq\delta_{\{x':g_{i}(x)\leq0\}}(x)=f_{i}(x)\text{ for all }i\in V_{5},x\in X_{i},
\]
which leads to \eqref{eq:h-a-n-w-lesser}. Define the function $F^{n,w}:\mathbf{X}^{|V\cup\bar{E}|}\to\mathbb{R}\cup\{\infty\}$
to be 
\begin{equation}
\begin{array}{c}
F^{n,w}(\{\mathbf{z}_{\alpha}\}_{\alpha\in\bar{E}\cup V}):=-\frac{1}{2}\bigg\|\bar{\mathbf{x}}-\underset{\alpha\in\bar{E}\cup V}{\sum}\mathbf{z}_{\alpha}\bigg\|^{2}+\frac{1}{2}\|\bar{\mathbf{x}}\|^{2}-\underset{\alpha\in\bar{E}\cup V}{\sum}\mathbf{f}_{\alpha,n,w}^{*}(\mathbf{z}_{\alpha}).\end{array}\label{eq:h-def-1}
\end{equation}
By \eqref{eq_m:h-a-n-w}, $F^{n,w}(\cdot)$ is a lower approximate
of $F(\cdot)$ for the maximization problem \eqref{eq:general-dual}.
Based on our original motivation in Example \ref{exa:distrib-dyk}
from \cite{Pang_Dist_Dyk,Pang_sub_Dyk}, we make the following definition. 
\begin{defn}
\label{def:connects}Let $D:=\cap_{\alpha\in\bar{E}}H_{\alpha}$.
We say that a subset $E'\subset\bar{E}$ \emph{connects} $V$ if 
\begin{equation}
\cap_{\alpha\in E'}H_{\alpha}=D.\label{eq:intersect-H}
\end{equation}
\end{defn}

Since $H_{\alpha}$ were assumed to be linear subspaces, it is clear
that condition \eqref{eq:intersect-H} on $E'$ is equivalent to 
\begin{equation}
\begin{array}{c}
\underset{\alpha\in E'}{\overset{\phantom{\alpha\in E'}}{\sum}}H_{\alpha}^{\perp}=D^{\perp}.\end{array}\label{eq:D-perp}
\end{equation}
To provide more intuition, note that the set $D$ defined through
\eqref{eq:intersect-H}  has the simplifications\begin{subequations}
\begin{eqnarray}
 &  & \begin{array}{c}
D^{\phantom{\perp}}=\{\mathbf{x}\in[\mathbb{R}^{m}]^{|V|}:\mathbf{x}=(x,x,\dots,x)\mbox{ for some }x\in\mathbb{R}^{m}\}\end{array}\label{eq:D-formula-a}\\
 & \mbox{ and } & \begin{array}{c}
D^{\perp}=\Big\{\mathbf{x}\in[\mathbb{R}^{m}]^{|V|}:\underset{i\in V}{\overset{\phantom{i\in V}}{\sum}}[\mathbf{x}]_{i}=0\Big\},\end{array}\label{eq:D-formula-b}
\end{eqnarray}
\end{subequations}which are consistent with the usual product space
formulation. 

The following simple result was needed in \cite{Pang_Dist_Dyk} in
order to show that the distributed Dykstra's algorithm works for time
varying graphs, but it is not needed here. Nevertheless, this result
explains line 5 of Algorithm \ref{alg:Ext-Dyk}. The proof is exactly
the same as in \cite{Pang_rate_D_Dyk}.
\begin{lem}
\cite{Pang_rate_D_Dyk} \label{lem:express-v}There is a constant
$c_{reg}>0$ such that for any $\mathbf{v}\in D^{\perp}$ and any
$E'\subset\bar{E}$ such that $E'$ connects $V$, we can write $\mathbf{v}=\sum_{\alpha\in E'}\mathbf{v}_{\alpha}$
so that\textbf{ $\mathbf{v}_{\alpha}\in H_{\alpha}^{\perp}$ }and
$\|\mathbf{v}_{\alpha}\|\leq c_{reg}\|\mathbf{v}\|$ for all $\alpha\in E'$. 
\end{lem}

To simplify calculations, we let the vectors $\mathbf{v}_{A}$, $\mathbf{v}_{H}$
and $\mathbf{x}$ in $\mathbf{X}$ be denoted by\begin{subequations}\label{eq_m:all_acronyms}
\begin{eqnarray}
\mathbf{v}_{H} & = & \begin{array}{c}
\underset{\alpha\in\bar{E}}{\overset{\phantom{i\in V}}{\sum}}\mathbf{z}_{\alpha}\end{array}\label{eq:v-H-def}\\
\mathbf{v}_{A} & = & \begin{array}{c}
\mathbf{v}_{H}+\underset{i\in V}{\overset{\phantom{i\in V}}{\sum}}\mathbf{z}_{i}\end{array}\label{eq:from-10}\\
\mathbf{x} & = & \begin{array}{c}
\bar{\mathbf{x}}-\mathbf{v}_{A}.\end{array}\label{eq:x-from-v-A}
\end{eqnarray}
\end{subequations}

We now state Algorithm \vref{alg:Ext-Dyk}. Algorithm \ref{alg:Ext-Dyk}
calls on Algorithm \vref{alg:subdiff-subalg} as a subalgorithm. 

\begin{algorithm}[!h]
\begin{lyxalgorithm}
\label{alg:Ext-Dyk}(Distributed Dykstra's algorithm) Consider the
problem \eqref{eq:general-framework} along with the associated dual
problem \eqref{eq:general-dual}.

Let $\bar{w}$ be a positive integer. Let $c_{reg}>0$ satisfy Lemma
\ref{lem:express-v}. For each $\alpha\in[\bar{E}\cup V]\backslash[V_{4}\cup V_{5}]$,
$n\geq1$ and $w\in\{1,\dots,\bar{w}\}$, let $\mathbf{f}_{\alpha,n,w}:\mathbf{X}\to\mathbb{R}$
be as defined in \eqref{eq_m:h-a-n-w}. Our distributed Dykstra's
algorithm is as follows:

01$\quad$Let 

\begin{itemize}
\item $\mathbf{z}_{i}^{1,0}\in\mathbf{X}$ be a starting dual vector for
$\mathbf{f}_{i}(\cdot)$ for each $i\in V$ so that $[\mathbf{z}_{i}^{1,0}]_{j}=0\in X_{j}$
for all $j\in V\backslash\{i\}$. 
\item $\mathbf{v}_{H}^{1,0}\in D^{\perp}$ be a starting dual vector.

\begin{itemize}
\item Note: $\{\mathbf{z}_{\alpha}^{n,0}\}_{\alpha\in\bar{E}}$ is defined
through $\mathbf{v}_{H}^{n,0}$ in \eqref{eq_m:resetted-z-i-j}.
\end{itemize}
\item Let $\mathbf{x}^{1,0}$ be $\mathbf{x}^{1,0}=\bar{\mathbf{x}}-\mathbf{v}_{H}^{1,0}-\sum_{i\in V}\mathbf{z}_{i}^{1,0}$.
\end{itemize}
02$\quad$For each $i\in V_{4}$, let $\mathbf{f}_{i,1,0}:\mathbf{X}\to\mathbb{R}$
be a function such that $\mathbf{f}_{i,1,0}(\cdot)\leq\mathbf{f}_{i}(\cdot)$

$\phantom{\text{02}}\quad$For each $i\in V_{5}$, let $g_{i,1,0}:X_{i}\to\mathbb{R}$
be a function such that $g_{i,1,0}(\cdot)\leq g(\cdot)$. 

03$\quad$For $n=1,2,\dots$

04$\quad$$\quad$\textup{Let $\bar{E}_{n}\subset\bar{E}$ be such
that $\bar{E}_{n}$ connects $V$ in the sense of Definition \ref{def:connects}.}

05$\quad$$\quad$\textup{Define $\{\mathbf{z}_{\alpha}^{n,0}\}_{\alpha\in\bar{E}}$
so that:\begin{subequations}\label{eq_m:resetted-z-i-j} 
\begin{eqnarray}
\mathbf{z}_{\alpha}^{n,0} & = & 0\mbox{ for all }\alpha\notin\bar{E}_{n}\label{eq:reset-z-i-j-1}\\
\mathbf{z}_{\alpha}^{n,0} & \in & H_{\alpha}^{\perp}\mbox{ for all }\alpha\in\bar{E}\label{eq:reset-z-i-j-2}\\
\|\mathbf{z}_{\alpha}^{n,0}\| & \leq & c_{reg}\|\mathbf{v}_{H}^{n,0}\|\mbox{ for all }\alpha\in\bar{E}\label{eq:reset-z-i-j-3}\\
\mbox{ and }\sum_{\alpha\in\bar{E}}\mathbf{z}_{\alpha}^{n,0} & = & \mathbf{v}_{H}^{n,0}.\label{eq:reset-z-i-j-4}
\end{eqnarray}
\end{subequations}}

$\quad$$\quad$\textup{(This is possible by Lemma \ref{lem:express-v}.) }

06$\quad$$\quad$For $w=1,2,\dots,\bar{w}$

07$\quad$$\quad$$\quad$Choose a set $S_{n,w}\subset\bar{E}_{n}\cup V$
such that $S_{n,w}\neq\emptyset$. 

08$\quad$$\quad$$\quad$If $S_{n,w}\subset V_{4}\cup V_{5}$, then 

09$\quad$$\quad$$\quad$$\quad$Apply Algorithm \ref{alg:subdiff-subalg}.

10$\quad$$\quad$$\quad$else

11$\quad$$\quad$$\quad$$\quad$Set $\mathbf{f}_{i,n,w}(\cdot):=\mathbf{f}_{i,n,w-1}(\cdot)$
for all $i\in V_{4}$.

12$\quad$$\quad$$\quad$$\quad$Define $\{\mathbf{z}_{\alpha}^{n,w}\}_{\alpha\in S_{n,w}}$
by 
\begin{equation}
\begin{array}{c}
\{\mathbf{z}_{\alpha}^{n,w}\}_{\alpha\in S_{n,w}}=\underset{\mathbf{z}_{\alpha},\alpha\in S_{n,w}}{\arg\min}\frac{1}{2}\Big\|\bar{\mathbf{x}}-\underset{\alpha\notin S_{n,w}}{\overset{\phantom{\alpha\notin S_{n,w}}}{\sum}}\mathbf{z}_{\alpha}^{n,w-1}-\underset{\alpha\in S_{n,w}}{\sum}\mathbf{z}_{\alpha}\Big\|^{2}+\underset{\alpha\in S_{n,w}}{\overset{\phantom{\alpha\in S_{n,w}}}{\sum}}\mathbf{f}_{\alpha,n,w}^{*}(\mathbf{z}_{\alpha}).\end{array}\label{eq:Dykstra-min-subpblm}
\end{equation}

13$\quad$$\quad$$\quad$end if 

14$\quad$$\quad$$\quad$Set $\mathbf{z}_{\alpha}^{n,w}:=\mathbf{z}_{\alpha}^{n,w-1}$
for all $\alpha\notin S_{n,w}$.

15$\quad$$\quad$End For 

16$\quad$$\quad$Let $\mathbf{z}_{i}^{n+1,0}=\mathbf{z}_{i}^{n,\bar{w}}$
for all $i\in V$ and $\mathbf{v}_{H}^{n+1,0}=\mathbf{v}_{H}^{n,\bar{w}}=\sum_{\alpha\in\bar{E}}\mathbf{z}_{\alpha}^{n,\bar{w}}$.

17$\quad$$\quad$Let $\mathbf{f}_{i,n+1,0}(\cdot)=\mathbf{f}_{i,n,\bar{w}}(\cdot)$
for all $i\in V_{4}$.

$\phantom{\text{17}}\quad$$\quad$Let $g_{i,n+1,0}(\cdot)=g_{i,n,\bar{w}}(\cdot)$
for all $i\in V_{5}$.

18$\quad$End For 
\end{lyxalgorithm}

\end{algorithm}

\begin{algorithm}[!h]
\begin{lyxalgorithm}
\label{alg:subdiff-subalg}(Subalgorithm for outer approximates of
$C_{i}$) This algorithm is run when line 9 of Algorithm \ref{alg:Ext-Dyk}
is reached. Note that to get to this subalgorithm, $S_{n,w}\subset V_{4}\cup V_{5}$.
Suppose Assumption \ref{assu:to-start-subalg} holds.

00 For all $i\in S_{n,w}\cap V_{4}$, use steps in the corresponding
algorithm in \cite{Pang_rate_D_Dyk}.

01 For each $i\in S_{n,w}\cap V_{5}$ 

02 $\quad$For $\tilde{g}_{i,n,w-1}:X_{i}\to\mathbb{R}$ defined by
\begin{equation}
\tilde{g}_{i,n,w-1}(x):=g_{i}([\bar{\mathbf{x}}-\mathbf{v}_{H}^{n,w-1}-\mathbf{z}_{i}^{n,w-1}]_{i})+\langle s,x-[\bar{\mathbf{x}}-\mathbf{v}_{H}^{n,w-1}-\mathbf{z}_{i}^{n,w-1}]_{i}\rangle,\label{eq:linearize-f-i-n-w}
\end{equation}

where $s\in\partial g_{i}([\bar{\mathbf{x}}-\mathbf{v}_{H}^{n,w-1}-\mathbf{z}_{i}^{n,w-1}]_{i})$,
consider 
\begin{equation}
\begin{array}{c}
\underset{x\in X_{i}}{\min}\left[\frac{1}{2}\|[\bar{\mathbf{x}}-\mathbf{v}_{H}^{n,w-1}]_{i}-x\|^{2}\text{ s.t. }g_{i,n,w-1}(x)\leq0\text{ and }\tilde{g}_{i,n,w-1}(x)\leq0\right].\end{array}\label{eq:alg-primal-subpblm}
\end{equation}

03 $\quad$Let the primal of \eqref{eq:alg-primal-subpblm} be $x_{i}^{+}$,
and its dual solution $[\bar{\mathbf{x}}-\mathbf{v}_{H}^{n,w-1}]_{i}-x_{i}^{+}$
be $z_{i}^{+}$. 

04 $\quad$Define $g_{i,n,w}:X_{i}\to\mathbb{R}$ to be the affine
function 
\begin{equation}
g_{i,n,w}(x):=g_{i,n,w-1}(x_{i}^{+})+\langle x-x_{i}^{+},[\bar{\mathbf{x}}-\mathbf{v}_{H}^{n,w-1}]_{i}-x_{i}^{+}\rangle.\label{eq:def-f-i-n-w}
\end{equation}

05 $\quad$In other words, $g_{i,n,w}(\cdot)$ is chosen such that
the 

$\qquad\qquad$primal and dual optimizers to \eqref{eq:alg-primal-subpblm}
coincide with that of 
\begin{equation}
\begin{array}{c}
\underset{x\in X_{i}}{\min}\left[\frac{1}{2}\|[\bar{\mathbf{x}}-\mathbf{v}_{H}^{n,w-1}]_{i}-x\|^{2}\text{ s.t. }g_{i,n,w}(x)\leq0\right].\end{array}\label{eq:finw-design}
\end{equation}

06 $\quad$Define the dual vector $\mathbf{z}_{i}^{n,w}\in\mathbf{X}$
to be 
\begin{equation}
[\mathbf{z}_{i}^{n,w}]_{j}:=\begin{cases}
z_{i}^{+} & \mbox{ if }j=i\\
0 & \mbox{ if }j\neq i.
\end{cases}\label{eq:def-z-i}
\end{equation}

07 End for

08 For all $i\in V_{4}\backslash S_{n,w}$, $\mathbf{f}_{i,n,w}(\cdot)=\mathbf{f}_{i,n,w-1}(\cdot)$.

09 For all $i\in V_{5}\backslash S_{n,w}$, $g_{i,n,w}(\cdot)=g_{i,n,w-1}(\cdot)$.
\end{lyxalgorithm}

\end{algorithm}

\begin{rem}
(Intuition behind Algorithms \ref{alg:Ext-Dyk} and \ref{alg:subdiff-subalg})
We summarize the intuition behind Algorithms \ref{alg:Ext-Dyk} and
\ref{alg:subdiff-subalg}. Dykstra's algorithm is block coordinate
ascent on the dual \eqref{eq:general-dual}, and this is reflected
in lines 7-14 of Algorithm \ref{alg:Ext-Dyk}. That is, find $\mathbf{z}\in\mathbf{X}^{|\bar{E}\cup V|}$
that tries to improve the objective value of \eqref{eq:h-def}. As
explained in \cite{Pang_Dist_Dyk}, one only needs to keep track of
$\mathbf{x}_{i}$ and $[\mathbf{z}_{i}]_{i}$ for all $i\in V$, and
not all the variables. Line 5 corrects $\{\mathbf{z}_{\alpha}\}_{\alpha\in\bar{E}}$
so that the dual objective value remains the same, and this consideration
is needed when we try to prove that the algorithm works for time-varying
graphs. What is new in this paper is the consideration for $i\in V_{5}$.
 When $i\in V_{5}$, we have $f_{i}=\delta_{C_{i}}(\cdot)$, where
$C_{i}=\{x:g_{i}(x)\leq0\}$. Since projection onto $C_{i}$ may not
be easy, we let $f_{i,n,w}(\cdot)=\delta_{\{x:g_{i,n,w}(x)\leq0\}}(\cdot)$,
where $g_{i,n,w}(\cdot)$ is affine. One can see that the projection
onto the halfspace $\{x:g_{i,n,w}(x)\leq0\}$ is easier than that
of $C_{i}$. We have $f_{i,n,w}(\cdot)\leq f_{i}(\cdot)$, which is
also $f_{i,n,w}^{*}(\cdot)\geq f_{i}^{*}(\cdot)$. We shall show that
solving a sequence of problems involving $f_{i,n,w}^{*}(\cdot)$ instead
of $f_{i}^{*}(\cdot)$ would minorize the objective value in \eqref{eq:h-def}
by \eqref{eq:h-def-1}, and allow the dual objective value to converge
to its optimum value, which in turn leads to convergence of the primal
to its minimizer. 
\end{rem}

The following result is essential for showing that the distributed
Dykstra's algorithm is asynchronous, and also in showing that the
problems involving the nodes in $i\in V$ are separable. 
\begin{prop}
\label{prop:sparsity}(Sparsity of $\mathbf{z}_{\alpha}$) We have
$[\mathbf{z}_{i}^{n,w}]_{j}=0$ for all $j\in V\backslash\{i\}$,
$n\geq1$ and $w\in\{0,1,\dots,\bar{w}\}$. 
\end{prop}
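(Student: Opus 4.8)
The plan is to prove Proposition \ref{prop:sparsity} by induction on the iteration counters $(n,w)$, ordered lexicographically, carrying the single invariant that $[\mathbf{z}_i^{n,w}]_j = 0$ for every $i\in V$ and every $j\in V\setminus\{i\}$. The base case $(n,w)=(1,0)$ is exactly the initialization requirement imposed in line 01 of Algorithm \ref{alg:Ext-Dyk}. The transition from the end of one outer iteration to the start of the next is harmless: line 16 sets $\mathbf{z}_i^{n+1,0}=\mathbf{z}_i^{n,\bar{w}}$, so the invariant at $(n,\bar{w})$ passes verbatim to $(n+1,0)$, and line 05 only redefines the edge vectors $\mathbf{z}_\alpha$ with $\alpha\in\bar{E}$, which are not indexed by $V$ and hence irrelevant to the statement. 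It therefore remains to show that if the invariant holds at $(n,w-1)$ then it holds at $(n,w)$.

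For the inner step I would split according to how each $\mathbf{z}_i^{n,w}$ is produced in lines 06--15. If $i\notin S_{n,w}$, then line 14 copies $\mathbf{z}_i^{n,w}=\mathbf{z}_i^{n,w-1}$, so the invariant is inherited from $(n,w-1)$. If $i\in S_{n,w}$ and the subalgorithm branch is taken (so $S_{n,w}\subset V_4\cup V_5$), then for $i\in V_5$ the sparsity is built in by construction: equation \eqref{eq:def-z-i} in line 06 of Algorithm \ref{alg:subdiff-subalg} sets $[\mathbf{z}_i^{n,w}]_j=0$ for all $j\neq i$ directly; for $i\in V_4$ the vector comes from the corresponding update imported from \cite{Pang_rate_D_Dyk} (line 00), which has the identical single-block structure, so I would invoke the analogous sparsity established there.

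The remaining case, $i\in S_{n,w}$ with the update coming from the block-minimization \eqref{eq:Dykstra-min-subpblm} in line 12, is where the actual content lies, and it hinges on the effective domain of the conjugate $\mathbf{f}_{i,n,w}^*$. Because $\mathbf{f}_{i,n,w}$ depends on $\mathbf{x}$ only through its $i$-th block (recall \eqref{eq:f-component}, with $f_{i,n,w}$ proper), a direct conjugate computation gives
\[
\mathbf{f}_{i,n,w}^*(\mathbf{z}) = \begin{cases} f_{i,n,w}^*([\mathbf{z}]_i) & \text{if } [\mathbf{z}]_j = 0 \text{ for all } j \neq i, \\ +\infty & \text{otherwise,} \end{cases}
\]
since the supremum defining the conjugate is $+\infty$ along any block $j\neq i$ on which $[\mathbf{z}]_j\neq 0$. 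Hence $\dom\mathbf{f}_{i,n,w}^*\subset\{\mathbf{z}:[\mathbf{z}]_j=0\text{ for }j\neq i\}$. A minimizer of \eqref{eq:Dykstra-min-subpblm} necessarily lies in the effective domain of its objective, which in the $\mathbf{z}_i$ component is contained in $\dom\mathbf{f}_{i,n,w}^*$; combined with the display this forces $[\mathbf{z}_i^{n,w}]_j=0$ for all $j\neq i$ and closes the induction.

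I expect the main obstacle to be bookkeeping rather than analysis: one must check that every branch of the inner loop --- the copy in line 14, the explicit construction in Algorithm \ref{alg:subdiff-subalg}, and the block-minimization in line 12 --- preserves the same invariant, and in particular that the $V_4$ branch inherited from \cite{Pang_rate_D_Dyk} carries the single-block property. The genuinely substantive observation, the separable form of $\mathbf{f}_{i,n,w}^*$, is a short and standard conjugacy computation once the dependence of $\mathbf{f}_{i,n,w}$ on a single coordinate block is isolated.
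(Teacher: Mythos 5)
Your proof is correct, and since the paper states Proposition \ref{prop:sparsity} without proof (it is recalled from the earlier works \cite{Pang_Dist_Dyk,Pang_rate_D_Dyk}), your induction over $(n,w)$ combined with the observation that $\dom\mathbf{f}_{i,n,w}^{*}\subset\{\mathbf{z}:[\mathbf{z}]_{j}=0\text{ for }j\neq i\}$ (forced by $\mathbf{f}_{i,n,w}(\mathbf{x})=f_{i,n,w}([\mathbf{x}]_{i})$) is exactly the standard argument the paper is implicitly relying on. The bookkeeping across the initialization in line 01, the copy in line 14, the carry-over in line 16, the explicit construction \eqref{eq:def-z-i}, and the domain constraint in \eqref{eq:Dykstra-min-subpblm} is complete and accurate.
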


We state some notation necessary for further discussions. For any
$\alpha\in\bar{E}\cup V$ and $n\in\{1,2,\dots\}$, let $p(n,\alpha)$
be 
\begin{equation}
p(n,\alpha):=\max\{w':w'\leq\bar{w},\alpha\in S_{n,w'}\}.\label{eq:p-n-alpha}
\end{equation}
In other words, $p(n,\alpha)$ is the index $w'$ such that $\alpha\in S_{n,w'}$
but $\alpha\notin S_{n,k}$ for all $k\in\{w'+1,\dots,\bar{w}\}$.
We make three assumptions listed below.
\begin{assumption}
\label{assu:to-start-subalg}(Start of Algorithm \ref{alg:subdiff-subalg})
Recall that at the start of Algorithm \ref{alg:subdiff-subalg}, $S_{n,w}\subset V_{4}\cup V_{5}$.
We make three assumptions.

\begin{enumerate}
\item Whenever $(n,w)$ is such that $w>1$ and $S_{n,w}\subset V_{4}\cup V_{5}$
so that Algorithm \ref{alg:subdiff-subalg} is invoked, each $\mathbf{z}_{i}^{n,w-1}\in\mathbf{X}$,
where $i\in V_{4}\cup V_{5}$, is such that $[\mathbf{z}_{i}^{n,w-1}]_{i}\in X_{i}$
is the optimizer to the problem 
\begin{equation}
\begin{array}{c}
\underset{z\in X_{i}}{\min}\frac{1}{2}\|[\bar{\mathbf{x}}-\mathbf{v}_{H}^{n,w-1}]_{i}-z\|^{2}+f_{i,n,w-1}^{*}(z).\end{array}\label{eq:multi-node-start}
\end{equation}
In other words, suppose $w_{i}\geq1$ is the largest $w'$ such that
$i\in S_{n,w'}$ and $i\notin S_{n,\tilde{w}}$ for all $\tilde{w}\in\{w'+1,w'+2,\dots,w-1\}$.
Then for all $\tilde{w}\in\{w_{i}+1,\dots,w-1\}$, and $\alpha\in S_{n,\tilde{w}}$,
the condition $\mathbf{v}\in H_{\alpha}$ implies $[\mathbf{v}]_{i}=0$.
\item Suppose that for all $i\in V_{4}\cup V_{5}$, $\tilde{w}\in\{p(n,i)+1,\dots,\bar{w}\}$
and $\alpha\in S_{n,\tilde{w}}$, the condition $\mathbf{v}\in H_{\alpha}$
implies $[\mathbf{v}]_{i}=0$. (This implies $\mathbf{x}_{i}^{n,p(n,i)}=\mathbf{x}_{i}^{n,\bar{w}}$.)
\item Suppose that $S_{n,1}=V_{4}\cup V_{5}$ for all $n>1$.
\end{enumerate}
\end{assumption}

With these assumptions, we are able to prove the following. Even though
the proof in \cite{Pang_sub_Dyk} for the analogue of Theorem \ref{thm:convergence}
below was for the case of Example \ref{exa:distrib-dyk}, the proofs
can be carried over in a straightforward manner. 
\begin{thm}
\label{thm:convergence} \cite{Pang_sub_Dyk}(Convergence to primal
minimizer) Consider Algorithm \ref{alg:Ext-Dyk}. Assume that the
problem \eqref{eq:general-framework} is feasible, and for all $n\geq1$,
$\bar{E}_{n}=[\cup_{w=1}^{\bar{w}}S_{n,w}]\cap\bar{E}$, and $[\cup_{w=1}^{\bar{w}}S_{n,w}]\supset V$.
Suppose also that Assumption \ref{assu:to-start-subalg} holds.

For the sequence $\{\mathbf{z}_{\alpha}^{n,w}\}_{{1\leq n<\infty\atop 0\leq w\leq\bar{w}}}\subset\mathbf{X}$
for each $\alpha\in\bar{E}\cup V$ generated by Algorithm \ref{alg:Ext-Dyk}
and the sequences $\{v_{H}^{n,w}\}_{{1\leq n<\infty\atop 0\leq w\leq\bar{w}}}\subset\mathbf{X}$
and $\{v_{A}^{n,w}\}_{{1\leq n<\infty\atop 0\leq w\leq\bar{w}}}\subset\mathbf{X}$
thus derived, we have:

\begin{enumerate}
\item [(i)]For all $n\geq1$ and $w_{1},w_{2}\in\{1,\dots,\bar{w}\}$ such
that $w_{1}\leq w_{2}$, 
\[
\begin{array}{c}
F^{n,w_{2}}(\mathbf{z}^{n,w_{2}})\geq F^{n,w_{1}}(\mathbf{z}^{n,w_{1}})+\frac{1}{2}\underset{w'=w_{1}+1}{\overset{w_{2}}{\sum}}\|\mathbf{v}_{A}^{n,w'}-\mathbf{v}_{A}^{n,w'-1}\|^{2}.\end{array}
\]
Hence the sum $\sum_{n=1}^{\infty}\sum_{w=1}^{\bar{w}}\|\mathbf{v}_{A}^{n,w}-\mathbf{v}_{A}^{n,w-1}\|^{2}$
is finite and $\{F_{n,\bar{w}}(\{\mathbf{z}_{\alpha}^{n,\bar{w}}\}_{\alpha\in\bar{E}\cup V})\}_{n=1}^{\infty}$
is nondecreasing.
\item [(ii)]There is a constant $C$ such that $\|\mathbf{v}_{A}^{n,w}\|^{2}\leq C$
for all $n\in\mathbb{N}$ and $w\in\{1,\dots,\bar{w}\}$. 
\item [(iii)]For all $i\in V_{3}\cup V_{4}$, $n\geq1$ and $w\in\{1,\dots,\bar{w}\}$,
the vectors $\mathbf{z}_{i}^{n,w}$ are bounded. 
\end{enumerate}
\end{thm}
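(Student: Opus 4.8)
The plan is to read Algorithm \ref{alg:Ext-Dyk} as block coordinate ascent on the minorized dual objectives $F^{n,w}$ of \eqref{eq:h-def-1}, and to extract a per-step sufficient-increase inequality from the $1$-strong concavity of the quadratic term $\mathbf{v}_A\mapsto-\tfrac12\|\bar{\mathbf{x}}-\mathbf{v}_A\|^2$ appearing in \eqref{eq:h-def}. The core is the single-step estimate
\[
F^{n,w}(\mathbf{z}^{n,w})\geq F^{n,w-1}(\mathbf{z}^{n,w-1})+\tfrac12\|\mathbf{v}_A^{n,w}-\mathbf{v}_A^{n,w-1}\|^2,
\]
which I would obtain by splitting it into two pieces. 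The first, $F^{n,w}(\mathbf{z}^{n,w-1})\geq F^{n,w-1}(\mathbf{z}^{n,w-1})$, reflects that tightening the model of each $C_i$ cannot lower the dual value: refining the affine minorants so that $\mathbf{f}_{\alpha,n,w}\geq\mathbf{f}_{\alpha,n,w-1}$ makes the conjugates decrease, so $F^{n,w}\geq F^{n,w-1}$ at the current iterate. The second, $F^{n,w}(\mathbf{z}^{n,w})\geq F^{n,w}(\mathbf{z}^{n,w-1})+\tfrac12\|\mathbf{v}_A^{n,w}-\mathbf{v}_A^{n,w-1}\|^2$, is the genuine ascent step: on the fixed objective $F^{n,w}$ the update \eqref{eq:Dykstra-min-subpblm} (or, when $S_{n,w}\subset V_4\cup V_5$, the subalgorithm) maximizes exactly over the block $\{\mathbf{z}_\alpha\}_{\alpha\in S_{n,w}}$, so writing the optimality condition for the block maximizer and combining the $1$-strong concavity in $\mathbf{v}_A$ with concavity of the remaining terms yields the three-point inequality with the $\tfrac12\|\cdot\|^2$ surplus.

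Telescoping this estimate over $w$ gives part (i). The reset in line 05 does not interfere, since by Lemma \ref{lem:express-v} it preserves $\mathbf{v}_H$ (hence $\mathbf{v}_A$, $\mathbf{x}$, and the value of every subspace conjugate $\delta_{H_\alpha^\perp}$, which stays $0$ because $\mathbf{z}_\alpha^{n,0}\in H_\alpha^\perp$); thus $F^{n,\bar{w}}$ carries over unchanged to $F^{n+1,0}$ and the monotonicity of $\{F^{n,\bar{w}}(\mathbf{z}^{n,\bar{w}})\}_n$ follows. Summability of $\sum_{n,w}\|\mathbf{v}_A^{n,w}-\mathbf{v}_A^{n,w-1}\|^2$ is then immediate once the values $F^{n,w}(\mathbf{z}^{n,w})$ are bounded above, which holds by weak duality: $F^{n,w}\leq F$ is bounded above by the optimal value of \eqref{eq:general-framework}, finite by feasibility.

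For (ii) I would combine the lower bound $F^{n,w}(\mathbf{z}^{n,w})\geq F^{1,1}(\mathbf{z}^{1,1})$ from (i) with a coercivity estimate. Fix a consensus point $\mathbf{y}^\star\in D$ whose common component is feasible for every $C_i$ (available since \eqref{eq:general-framework} is feasible). Since the minorants satisfy $\mathbf{f}_{\alpha,n,w}\leq\mathbf{f}_{\alpha}$ and $\mathbf{y}^\star$ remains feasible for the minorized constraints, the bound $\mathbf{f}_{\alpha,n,w}^*(\mathbf{z}_\alpha)\geq\langle\mathbf{z}_\alpha,\mathbf{y}^\star\rangle-\mathbf{f}_{\alpha,n,w}(\mathbf{y}^\star)$ summed over $\alpha$ gives $\sum_\alpha\mathbf{f}_{\alpha,n,w}^*(\mathbf{z}_\alpha^{n,w})\geq\langle\mathbf{v}_A^{n,w},\mathbf{y}^\star\rangle-M$ with $M:=\sum_\alpha\mathbf{f}_\alpha(\mathbf{y}^\star)<\infty$ uniform in $n,w$. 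Substituting into \eqref{eq:h-def-1} turns the lower bound on $F^{n,w}$ into $\tfrac12\|\bar{\mathbf{x}}-\mathbf{v}_A^{n,w}\|^2+\langle\mathbf{v}_A^{n,w},\mathbf{y}^\star\rangle\leq\mathrm{const}$, which is coercive in $\mathbf{v}_A^{n,w}$ and forces $\|\mathbf{v}_A^{n,w}\|^2\leq C$. Part (iii) then follows from (ii): for $i\in V_4$ the Lipschitz constant $L$ bounds $\dom f_i^*$, so $\|\mathbf{z}_i^{n,w}\|\leq L$ directly; for $i\in V_3$ the diagonal component satisfies $[\mathbf{z}_i^{n,w}]_i\in\partial f_i([\mathbf{x}^{n,w}]_i)$ at the now-bounded primal point $\mathbf{x}^{n,w}=\bar{\mathbf{x}}-\mathbf{v}_A^{n,w}$ from \eqref{eq:x-from-v-A}, and subgradients of a finite convex function over a bounded set are bounded, while the off-diagonal components vanish by Proposition \ref{prop:sparsity}.

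I expect the main obstacle to be the second half of the single-step estimate when $S_{n,w}\subset V_4\cup V_5$, namely verifying that the subalgorithm \ref{alg:subdiff-subalg} really realizes an exact block maximization of $F^{n,w}$ with the claimed surplus. The difficulty is that the subalgorithm solves the primal problem \eqref{eq:alg-primal-subpblm} against the \emph{old} minorant $g_{i,n,w-1}$ together with the fresh linearization $\tilde{g}_{i,n,w-1}$, and only afterwards defines $g_{i,n,w}$ through \eqref{eq:def-f-i-n-w}; one must invoke the design recorded in line 05 — that $g_{i,n,w}$ is chosen so the primal and dual optimizers of \eqref{eq:alg-primal-subpblm} and \eqref{eq:finw-design} coincide — to identify this two-cut step with a single exact maximization against $\mathbf{f}_{\alpha,n,w}^*$, and this is also what secures the model-refinement inequality used above. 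Finally Assumption \ref{assu:to-start-subalg} must be used to guarantee that the components $[\mathbf{z}_i^{n,w-1}]_i$ entering the update are the genuine per-node optimizers \eqref{eq:multi-node-start}, so that the block maximizer and its $\tfrac12\|\cdot\|^2$ surplus are correctly attributed to the change in $\mathbf{v}_A$.
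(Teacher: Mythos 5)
The paper does not actually prove this theorem; it is imported from \cite{Pang_sub_Dyk} with the remark that the proof carries over, so your proposal can only be measured against the machinery the paper builds around it (Lemma \ref{lem:d_sqr_increase} and the strong-duality computations \eqref{eq:proj-link-1}--\eqref{eq:component-for-V5}). Against that yardstick, your decomposition of the single-step ascent estimate has a genuine gap. You split the step into $F^{n,w}(\mathbf{z}^{n,w-1})\geq F^{n,w-1}(\mathbf{z}^{n,w-1})$ (``model refinement'') plus a three-point block-ascent inequality on the fixed objective $F^{n,w}$. The first half fails for $i\in S_{n,w}\cap V_{5}$: the minorants are \emph{not} monotonically refined. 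Each $g_{i,n,w}$ in \eqref{eq:def-f-i-n-w} is a fresh single affine function; its halfspace contains the intersection $\{g_{i,n,w-1}\leq0\}\cap\{\tilde{g}_{i,n,w-1}\leq0\}$ but is in general not contained in $\{g_{i,n,w-1}\leq0\}$, so $f_{i,n,w}\not\geq f_{i,n,w-1}$ pointwise. Worse, even at the particular point $\mathbf{z}^{n,w-1}$ the inequality breaks in the wrong direction: $f_{i,n,w}^{*}$ is the conjugate of the indicator of a halfspace, hence finite only on the ray spanned by the new normal, while $[\mathbf{z}_{i}^{n,w-1}]_{i}$ lies on the ray of the old normal; thus $F^{n,w}(\mathbf{z}^{n,w-1})=-\infty$ generically, and the same defect makes your second half ($F^{n,w}(\mathbf{z}^{n,w})\geq F^{n,w}(\mathbf{z}^{n,w-1})+\tfrac12\|\mathbf{v}_{A}^{n,w}-\mathbf{v}_{A}^{n,w-1}\|^{2}$) true but vacuous. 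Your closing paragraph correctly identifies the two-cut-versus-one-cut issue as the crux, but the fix you propose does not rescue the model-refinement inequality, which is simply false.

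The repair is to never evaluate $F^{n,w}$ at the old dual iterate and instead pass through the primal, exactly as the paper does when it establishes \eqref{eq:h_n_plus_1_recur}: each block subproblem is a projection of $\bar{x}_{i}^{n,w-1}$ onto a halfspace (or, in \eqref{eq:alg-primal-subpblm}, onto the intersection of two halfspaces), the feasible set of the $(n,w)$ subproblem is contained in that of the $(n,w-1)$ subproblem, so the primal optimal value increases by at least $\tfrac12\|[\mathbf{x}^{n,w}]_{i}-[\mathbf{x}^{n,w-1}]_{i}\|^{2}$ by the argument of Lemma \ref{lem:d_sqr_increase}, and strong duality for each separable block, as in \eqref{eq:proj-link-1}, converts this into the claimed increase of $F^{n,w}(\mathbf{z}^{n,w})$ over $F^{n,w-1}(\mathbf{z}^{n,w-1})$ with surplus $\tfrac12\|\mathbf{v}_{A}^{n,w}-\mathbf{v}_{A}^{n,w-1}\|^{2}$; the design condition \eqref{eq:finw-design} is what identifies the two-cut problem with the single-cut problem defining $f_{i,n,w}^{*}$. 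Your arguments for (ii) (Fenchel--Young coercivity at a feasible consensus point, using only $\mathbf{f}_{\alpha,n,w}\leq\mathbf{f}_{\alpha}$, combined with the lower bound on the dual values from (i) and the value-preserving reset of line 05) and for (iii) are sound once (i) is in place.
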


Recall that by the optimality of $\mathbf{z}_{i}^{n,p(n,i)}$ in \eqref{eq:Dykstra-min-subpblm}
and Proposition \ref{prop:sparsity}, we have 
\begin{equation}
[\mathbf{z}_{i}^{n,p(n,i)}]_{i}=\arg\min_{z_{i}\in X_{i}}\frac{1}{2}\bigg\| z_{i}-\underbrace{\bigg([\bar{\mathbf{x}}]_{i}-\sum_{\beta\neq i}[\mathbf{z}_{\beta}^{n,p(n,i)}]_{i}\bigg)}_{=:\bar{x}_{i}^{n,p(n,i)}}\bigg\|^{2}+f_{i,n,p(n,i)}^{*}(z_{i}).\label{eq:small-orig-dual}
\end{equation}
We also have 
\begin{equation}
\begin{array}{c}
[\mathbf{x}^{n,p(n,i)}]_{i}=\underset{x_{i}\in X_{i}}{\arg\min}\frac{1}{2}\|x_{i}-\bar{x}_{i}^{n,p(n,i)}\|^{2}+f_{i,n,p(n,i)}(x_{i}).\end{array}\label{eq:small-orig-primal}
\end{equation}
To see that \eqref{eq:small-orig-primal} holds, there are three cases
to consider. The first case is when \eqref{eq:Dykstra-min-subpblm}
in Algorithm \ref{alg:Ext-Dyk} is solved, in which case one can check
that \eqref{eq:small-orig-primal} holds by looking at the $i$th
component in \eqref{eq:Dykstra-min-subpblm}. The second case is when
\eqref{eq:finw-design} in Algorithm \ref{alg:subdiff-subalg} holds
(which is equivalent to \eqref{eq:small-orig-primal}), and the last
case is the treatment for subdifferentiable functions in the analogue
of Algorithm \ref{alg:subdiff-subalg} in \cite{Pang_rate_D_Dyk}. 

Note that \eqref{eq:small-orig-dual} and \eqref{eq:small-orig-primal}
can be considered primal-dual pairs. (The more accurate primal-dual
pair is \eqref{eq:p-d-1-case}, but it is clear that we can change
the sign and add a constant to one of the problems to get the pair
\eqref{eq:p-d-1-case}.) 

Let the prox center $\bar{x}_{i}^{n,p(n,i)}$ be as marked in \eqref{eq:small-orig-dual},
and let $\hat{z}_{i}^{n,p(n,i)}\in X_{i}$ be 
\begin{equation}
\begin{array}{c}
\hat{z}_{i}^{n,p(n,i)}=\underset{\hat{z}_{i}\in X_{i}}{\arg\min}\frac{1}{2}\|\hat{z}_{i}-\bar{x}_{i}^{n,p(n,i)}\|^{2}+f_{i}^{*}(\hat{z}_{i}),\end{array}\label{eq:small-sub-dual}
\end{equation}
and let $\hat{\mathbf{z}}_{i}^{n,p(n,i)}\in X$ be such that $[\hat{\mathbf{z}}_{i}^{n,p(n,i)}]_{i}=\hat{z}_{i}^{n,p(n,i)}$,
and $[\hat{\mathbf{z}}_{i}^{n,p(n,i)}]_{j}=0$ if $j\neq i$. Note
that \eqref{eq:small-sub-dual} is distinct from \eqref{eq:small-orig-dual}.
If $e\in\bar{E}$, then $\hat{\mathbf{z}}_{e}^{n,p(n,e)}=\mathbf{z}_{e}^{n,p(n,e)}$.
Let $\hat{x}_{i}^{n,p(n,i)}\in X_{i}$ be found by the dual to \eqref{eq:small-sub-dual},
i.e.,
\begin{equation}
\begin{array}{c}
\hat{x}_{i}^{n,p(n,i)}=\underset{\hat{x}_{i}\in X_{i}}{\arg\min}\frac{1}{2}\|\hat{x}_{i}-\bar{x}_{i}^{n,p(n,i)}\|^{2}+f_{i}(\hat{x}_{i}),\end{array}\label{eq:small-sub-primal}
\end{equation}

The Moreau decomposition theorem can be used to prove that 
\begin{equation}
[\mathbf{x}^{n,p(n,i)}]_{i}+[\mathbf{z}_{i}^{n,p(n,i)}]_{i}=\hat{x}_{i}^{n,p(n,i)}+\hat{z}_{i}^{n,p(n,i)}=\bar{x}_{i}^{n,p(n,i)}.\label{eq:Moreau}
\end{equation}
Define $\Delta x_{i}\in X_{i}$ by 
\begin{equation}
\Delta x_{i}:=[\mathbf{x}^{n,p(n,i)}]_{i}-\hat{x}_{i}^{n,p(n,i)}\overset{\eqref{eq:Moreau}}{=}[\hat{\mathbf{z}}_{i}^{n,p(n,i)}]_{i}-[\mathbf{z}_{i}^{n,p(n,i)}]_{i}\in X_{i}.\label{eq:def-delta-z-i}
\end{equation}
Note that this value was called $\Delta z_{i}$ in \cite{Pang_rate_D_Dyk}.
We have 
\begin{eqnarray*}
 &  & \begin{array}{c}
\langle[\mathbf{x}^{n,p(n,i)}]_{i},[\mathbf{z}_{i}^{n,p(n,i)}]_{i}\rangle-\langle\hat{x}_{i}^{n,p(n,i)},[\hat{\mathbf{z}}_{i}^{n,p(n,i)}]_{i}\rangle\end{array}\\
 & \overset{\eqref{eq:def-delta-z-i}}{=} & \begin{array}{c}
\langle[\mathbf{x}^{n,p(n,i)}]_{i}-[\mathbf{z}_{i}^{n,p(n,i)}]_{i},\Delta z_{i}\rangle+\frac{1}{2}\|\Delta x_{i}\|^{2}.\end{array}
\end{eqnarray*}
Define $\hat{\mathbf{v}}_{A}^{n,w}\in\mathbf{X}$ like \eqref{eq_m:all_acronyms}
to be 
\begin{equation}
\begin{array}{c}
\hat{\mathbf{v}}_{A}^{n,w}:=\underset{\alpha\in V\cup\bar{E}}{\overset{\phantom{\alpha\in V\cup\bar{E}}}{\sum}}\hat{\mathbf{z}}_{i}^{n,w}.\end{array}\label{eq:v-hat}
\end{equation}
Let $h^{n,w}=-F^{n,w}(\mathbf{z}^{n,w})-(-F(\mathbf{z}^{*}))$, where
$\mathbf{z}^{*}\in\mathbf{X}^{|V\cup\bar{E}|}$ is any optimal solution,
and let $h^{n}$ be defined by 
\begin{equation}
h^{n}:=h^{n,0}=-F^{n,0}(\mathbf{z}^{n,0})-(-F(\mathbf{z}^{*})).\label{eq:def-h-n-values}
\end{equation}
Note that $h^{n}\geq0$. For the case $i\in V_{4}$, define $\Delta f_{i}\in\mathbb{R}$
to be 
\begin{equation}
\Delta f_{i}:=f_{i}([\mathbf{x}^{n,p(n,i)}]_{i})-f_{i,n,p(n,i)}([\mathbf{x}^{n,p(n,i)}]_{i}).\label{eq:def-delta-f-i}
\end{equation}
Formula \eqref{eq:h-a-n-w-lesser} implies $\Delta f_{i}\geq0$ for
all $i\in V_{4}$. We now recall some formulas proved in \cite{Pang_rate_D_Dyk}.
\begin{prop}
\cite{Pang_rate_D_Dyk} For each $\alpha\in V\cup\bar{E}$, the partial
subdifferential of $(-F)$ in the $\alpha$-th coordinate is, by \cite{Pang_rate_D_Dyk}
\begin{equation}
\partial(-F)_{\alpha}(\hat{\mathbf{z}}^{n,\bar{w}})\ni\underbrace{\sum_{\beta\neq\alpha}\hat{\mathbf{z}}_{\beta}^{n,\bar{w}}}_{\hat{\mathbf{t}}_{\alpha}}-\underbrace{\sum_{\beta\neq\alpha}\mathbf{z}_{\beta}^{n,p(n,\alpha)}}_{\mathbf{t}_{\alpha}}.\label{eq:partial-subdiff}
\end{equation}
Let $\hat{\mathbf{t}}_{\alpha}$ and $\mathbf{t}_{\alpha}$ be as
marked. Define $\bar{\mathbf{t}}_{\alpha}\in\mathbf{X}$ to be 
\begin{equation}
\begin{array}{c}
\bar{\mathbf{t}}_{\alpha}:=\underset{\beta\neq\alpha}{\overset{\phantom{\beta\neq\alpha}}{\sum}}\mathbf{z}_{\beta}^{n,\bar{w}}.\end{array}\label{eq:big-t-bar}
\end{equation}
Note the inequality $\|\mathbf{t}_{\alpha}-\bar{\mathbf{t}}_{\alpha}\|\leq\begin{array}{c}
\sqrt{\bar{w}}\sqrt{h^{n}-h^{n+1}}\end{array}$ proved in \cite{Pang_rate_D_Dyk}. Define $\mathbf{s}\in\mathbf{X}^{|V\cap\bar{E}|}$
by $\mathbf{s}_{\alpha}=\hat{\mathbf{t}}_{\alpha}-\mathbf{t}_{\alpha}$.
Let $\mathbf{z}^{*}\in\mathbf{X}^{|V\cup\bar{E}|}$ be any optimal
solution to \eqref{eq:general-dual}. From \eqref{eq:partial-subdiff},
we have $\mathbf{s}\in\partial(-F)(\hat{\mathbf{z}}^{n,\bar{w}})$,
and the techniques in \cite{Pang_rate_D_Dyk} give 
\begin{eqnarray}
 &  & \begin{array}{c}
-F(\hat{\mathbf{z}}^{n,\bar{w}})-[-F(\mathbf{z}^{*})]\leq\underset{\alpha\in V\cup\bar{E}}{\overset{\phantom{\alpha\in V\cup\bar{E}}}{\sum}}\big[[\|\bar{\mathbf{t}}_{\alpha}-\hat{\mathbf{t}}_{\alpha}\|+\|\mathbf{t}_{\alpha}-\bar{\mathbf{t}}_{\alpha}\|]\|\mathbf{z}_{\alpha}^{*}-\hat{\mathbf{z}}_{\alpha}^{n,\bar{w}}\|\big].\end{array}\nonumber \\
 & \leq & \begin{array}{c}
\underset{\alpha\in V\cup\bar{E}}{\overset{\phantom{\alpha\in V\cup\bar{E}}}{\sum}}\big[\|\bar{\mathbf{t}}_{\alpha}-\hat{\mathbf{t}}_{\alpha}\|\|\mathbf{z}_{\alpha}^{*}-\hat{\mathbf{z}}_{\alpha}^{n,\bar{w}}\|\big]+\sqrt{\bar{w}}\sqrt{h^{n}-h^{n+1}}\!\!\underset{\alpha\in V\cup\bar{E}}{\overset{\phantom{\alpha\in V\cup\bar{E}}}{\sum}}\|\mathbf{z}_{\alpha}^{*}-\hat{\mathbf{z}}_{\alpha}^{n,\bar{w}}\|.\end{array}\label{eq:subdiff-for-h}
\end{eqnarray}
From the fact that $\hat{\mathbf{z}}_{i}^{n,w}=\mathbf{z}_{i}^{n,w}$
for all $i\in V_{1}\cup V_{2}\cup V_{3}$, we have
\begin{eqnarray}
 &  & -[F^{n,\bar{w}}(\mathbf{z}^{n,\bar{w}})-F(\hat{\mathbf{z}}^{n,\bar{w}})]\label{eq:subdiff-part-2}\\
 & \leq & \sum_{i\in V_{4}\cup V_{5}}\big[\langle[\mathbf{x}^{n,p(n,i)}]_{i}-[\mathbf{z}_{i}^{n,p(n,i)}]_{i},\Delta x_{i}\rangle+\frac{1}{2}\|\Delta x_{i}\|^{2}+[f_{i}(\hat{x}_{i}^{n,p(n,i)})-f_{i,n,p(n,i)}([\mathbf{x}^{n,p(n,i)}]_{i})]\big]\nonumber \\
 &  & -\langle(\bar{\mathbf{x}}-\mathbf{v}_{A}^{n,\bar{w}}),(\mathbf{v}_{A}^{n,\bar{w}}-\hat{\mathbf{v}}_{A}^{n,\bar{w}})\rangle-\frac{1}{2}\|\sum_{i\in V_{4}\cup V_{5}}[\mathbf{z}_{i}^{n,\bar{w}}-\hat{\mathbf{z}}_{i}^{n,\bar{w}}]\|^{2}\nonumber \\
 & \leq & \sum_{i\in V_{4}\cup V_{5}}\big[\|[\mathbf{x}^{n,p(n,i)}]_{i}-[\mathbf{z}_{i}^{n,p(n,i)}]_{i}\|\|\Delta x_{i}\|+[f_{i}(\hat{x}_{i}^{n,p(n,i)})-f_{i,n,p(n,i)}([\mathbf{x}^{n,p(n,i)}]_{i})]\big]\nonumber \\
 &  & +\|\bar{\mathbf{x}}-\mathbf{v}_{A}^{n,\bar{w}}\|\|\mathbf{v}_{A}^{n,\bar{w}}-\hat{\mathbf{v}}_{A}^{n,\bar{w}}\|.\nonumber 
\end{eqnarray}
Also, the steps in \cite{Pang_rate_D_Dyk} give 
\begin{equation}
\|\Delta x_{i}\|\leq\sqrt{\Delta f_{i}}\text{ for all }i\in V_{4}.\label{eq:norm-leq-sqrt}
\end{equation}
\end{prop}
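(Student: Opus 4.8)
The plan is to establish the four displayed formulas in turn, treating the statement as a porting of the corresponding results of \cite{Pang_rate_D_Dyk} to the nodes in $V_5$. Throughout I would use the explicit form $-F(\{\mathbf{z}_\alpha\})=\frac{1}{2}\|\bar{\mathbf{x}}-\sum_\alpha\mathbf{z}_\alpha\|^2-\frac{1}{2}\|\bar{\mathbf{x}}\|^2+\sum_\alpha\mathbf{f}_\alpha^*(\mathbf{z}_\alpha)$ from \eqref{eq:h-def}, which is convex, and whose smooth part differentiates coordinatewise to $\partial(-F)_\alpha(\mathbf{z})=-(\bar{\mathbf{x}}-\sum_\beta\mathbf{z}_\beta)+\partial\mathbf{f}_\alpha^*(\mathbf{z}_\alpha)$. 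For \eqref{eq:partial-subdiff} I would read off a subgradient of $\mathbf{f}_\alpha^*$ at $\hat{\mathbf{z}}_\alpha^{n,\bar{w}}$ from the optimality condition of the hatted subproblem \eqref{eq:small-sub-dual}, namely $\bar{x}_i^{n,p(n,i)}-\hat{z}_i^{n,p(n,i)}=\hat{x}_i^{n,p(n,i)}\in\partial f_i^*(\hat{z}_i^{n,p(n,i)})$; lifting this to $\mathbf{X}$ and expanding the prox centre $\bar{x}_i^{n,p(n,i)}=[\bar{\mathbf{x}}]_i-\sum_{\beta\neq i}[\mathbf{z}_\beta^{n,p(n,i)}]_i$ gives $\bar{\mathbf{x}}-\sum_{\beta\neq\alpha}\mathbf{z}_\beta^{n,p(n,\alpha)}-\hat{\mathbf{z}}_\alpha^{n,\bar{w}}\in\partial\mathbf{f}_\alpha^*(\hat{\mathbf{z}}_\alpha^{n,\bar{w}})$ (using $\hat{\mathbf{z}}_\alpha^{n,p(n,\alpha)}=\hat{\mathbf{z}}_\alpha^{n,\bar{w}}$). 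Substituting into the coordinatewise formula evaluated at $\hat{\mathbf{z}}^{n,\bar{w}}$ and cancelling $\bar{\mathbf{x}}$ leaves exactly $\hat{\mathbf{t}}_\alpha-\mathbf{t}_\alpha$; for $\alpha\in\bar{E}$ the hatted and unhatted vectors coincide and the formula is trivial. The bound $\|\mathbf{t}_\alpha-\bar{\mathbf{t}}_\alpha\|\le\sqrt{\bar{w}}\sqrt{h^n-h^{n+1}}$ I would get by writing $\mathbf{t}_\alpha-\bar{\mathbf{t}}_\alpha=\sum_{\beta\neq\alpha}(\mathbf{z}_\beta^{n,p(n,\alpha)}-\mathbf{z}_\beta^{n,\bar{w}})$ as a telescoping sum of the block increments $\mathbf{v}_A^{n,w'}-\mathbf{v}_A^{n,w'-1}$ carried out between sweeps $p(n,\alpha)$ and $\bar{w}$, bounding the number of terms by $\bar{w}$, applying Cauchy--Schwarz for the $\sqrt{\bar{w}}$ factor, and controlling $\sum_{w'}\|\mathbf{v}_A^{n,w'}-\mathbf{v}_A^{n,w'-1}\|^2$ by $h^n-h^{n+1}$ through Theorem \ref{thm:convergence}(i); this is exactly the argument of \cite{Pang_rate_D_Dyk}, which I would cite.

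Inequality \eqref{eq:subdiff-for-h} is then immediate from convexity of $-F$. Since $\mathbf{s}\in\partial(-F)(\hat{\mathbf{z}}^{n,\bar{w}})$ with $\mathbf{s}_\alpha=\hat{\mathbf{t}}_\alpha-\mathbf{t}_\alpha$, the subgradient inequality at the optimiser $\mathbf{z}^*$ gives $-F(\hat{\mathbf{z}}^{n,\bar{w}})-[-F(\mathbf{z}^*)]\le\langle\mathbf{s},\hat{\mathbf{z}}^{n,\bar{w}}-\mathbf{z}^*\rangle=\sum_\alpha\langle\mathbf{s}_\alpha,\hat{\mathbf{z}}_\alpha^{n,\bar{w}}-\mathbf{z}_\alpha^*\rangle$. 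Splitting $\mathbf{s}_\alpha=(\hat{\mathbf{t}}_\alpha-\bar{\mathbf{t}}_\alpha)+(\bar{\mathbf{t}}_\alpha-\mathbf{t}_\alpha)$ and applying Cauchy--Schwarz, the triangle inequality, and the bound just proved yields both lines of \eqref{eq:subdiff-for-h}.

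Formula \eqref{eq:subdiff-part-2} is the heart of the computation. I would expand $-[F^{n,\bar{w}}(\mathbf{z}^{n,\bar{w}})-F(\hat{\mathbf{z}}^{n,\bar{w}})]$ using \eqref{eq:h-def} and \eqref{eq:h-def-1}, noting that for $\alpha\in V_1\cup V_2\cup V_3\cup\bar{E}$ both the conjugate terms and the dual vectors coincide, so only $i\in V_4\cup V_5$ contribute. The quadratic part $\frac{1}{2}\|\bar{\mathbf{x}}-\mathbf{v}_A^{n,\bar{w}}\|^2-\frac{1}{2}\|\bar{\mathbf{x}}-\hat{\mathbf{v}}_A^{n,\bar{w}}\|^2$ I complete into $-\langle\bar{\mathbf{x}}-\mathbf{v}_A^{n,\bar{w}},\mathbf{v}_A^{n,\bar{w}}-\hat{\mathbf{v}}_A^{n,\bar{w}}\rangle-\frac{1}{2}\|\mathbf{v}_A^{n,\bar{w}}-\hat{\mathbf{v}}_A^{n,\bar{w}}\|^2$, and the conjugate differences I convert to primal quantities via the Fenchel--Young equalities attached to the optima \eqref{eq:small-orig-primal} and \eqref{eq:small-sub-primal} together with the inner-product identity recorded just above \eqref{eq:v-hat}. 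The genuinely delicate point is the quadratic bookkeeping: by Proposition \ref{prop:sparsity} the supports of the $\mathbf{z}_i-\hat{\mathbf{z}}_i$ are disjoint, so $\|\sum_{i\in V_4\cup V_5}(\mathbf{z}_i^{n,\bar{w}}-\hat{\mathbf{z}}_i^{n,\bar{w}})\|^2=\sum_{i\in V_4\cup V_5}\|\Delta x_i\|^2$, which lets the collected $+\frac{1}{2}\|\Delta x_i\|^2$ terms cancel the remainder $-\frac{1}{2}\|\mathbf{v}_A^{n,\bar{w}}-\hat{\mathbf{v}}_A^{n,\bar{w}}\|^2$ exactly; Cauchy--Schwarz on the two remaining inner products then produces the final line of \eqref{eq:subdiff-part-2}.

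Finally, for \eqref{eq:norm-leq-sqrt} with $i\in V_4$ I would use that both $x\mapsto\frac{1}{2}\|x-\bar{x}_i^{n,p(n,i)}\|^2+f_i(x)$ and $x\mapsto\frac{1}{2}\|x-\bar{x}_i^{n,p(n,i)}\|^2+f_{i,n,p(n,i)}(x)$ are $1$-strongly convex, with minimisers $\hat{x}_i^{n,p(n,i)}$ and $[\mathbf{x}^{n,p(n,i)}]_i$ respectively. Applying the strong-convexity lower bound of each at the other's minimiser and adding the two resulting inequalities, the shared quadratic terms telescope and, using $f_{i,n,p(n,i)}\le f_i$ together with the definition \eqref{eq:def-delta-f-i} of $\Delta f_i$, one is left with $\|\Delta x_i\|^2\le\Delta f_i$. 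The main obstacle across the whole argument is the index bookkeeping, namely keeping the prox centres pinned to the \emph{unhatted} iterates $\mathbf{z}_\beta^{n,p(n,\alpha)}$ while the subgradients are evaluated at the hatted $\hat{\mathbf{z}}^{n,\bar{w}}$, together with the sparsity-driven cancellation in \eqref{eq:subdiff-part-2}; these are precisely the places where one must verify that the $V_5$ analysis runs identically to the $V_4$ analysis of \cite{Pang_rate_D_Dyk}.
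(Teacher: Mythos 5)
The paper offers no proof of this proposition—it is quoted wholesale from \cite{Pang_rate_D_Dyk}—but your reconstruction is correct and is essentially the intended argument: the coordinatewise subgradient read off from the optimality conditions of \eqref{eq:small-sub-dual} (via Moreau/Fenchel--Young), the telescoping-plus-Cauchy--Schwarz bound on $\|\mathbf{t}_{\alpha}-\bar{\mathbf{t}}_{\alpha}\|$ controlled by Theorem \ref{thm:convergence}(i), the expansion of $-[F^{n,\bar{w}}(\mathbf{z}^{n,\bar{w}})-F(\hat{\mathbf{z}}^{n,\bar{w}})]$ with the sparsity-driven identity $\|\sum_{i}(\mathbf{z}_{i}^{n,\bar{w}}-\hat{\mathbf{z}}_{i}^{n,\bar{w}})\|^{2}=\sum_{i}\|\Delta x_{i}\|^{2}$ cancelling the $\tfrac{1}{2}\|\Delta x_{i}\|^{2}$ terms, and the two strong-convexity inequalities yielding \eqref{eq:norm-leq-sqrt}. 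The only point I would double-check against \cite{Pang_rate_D_Dyk} is the absolute constant in $\|\mathbf{t}_{\alpha}-\bar{\mathbf{t}}_{\alpha}\|\leq\sqrt{\bar{w}}\sqrt{h^{n}-h^{n+1}}$, since the factor $\tfrac{1}{2}$ in Theorem \ref{thm:convergence}(i) naively produces $\sqrt{2\bar{w}}$; this does not affect the $O(1/n^{1/3})$ conclusion.
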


We also showed in \cite{Pang_rate_D_Dyk} that there is a bound $c'$
such that $\Delta f_{i}\leq c'$ for all $i\in V_{4}$, which implies
that $\Delta f_{i}\leq\sqrt{c'}\sqrt{\Delta f_{i}}$ for all $i\in V_{4}$.
Let $L$ be the Lipschitz constant of $f_{i}(\cdot)$. Thus
\begin{eqnarray*}
 &  & [f_{i}(\hat{x}_{i}^{n,p(n,i)})-f_{i,n,p(n,i)}([\mathbf{x}^{n,p(n,i)}]_{i})]\\
 & \overset{\eqref{eq:def-delta-f-i}}{=} & \Delta f_{i}+[f_{i}(\hat{x}_{i}^{n,p(n,i)})-f_{i}([\mathbf{x}^{n,p(n,i)}]_{i})]\\
 & \overset{\eqref{eq:def-delta-z-i}}{\leq} & \Delta f_{i}+L\|\Delta x_{i}\|\overset{\eqref{eq:norm-leq-sqrt}}{\leq}\Delta f_{i}+L\sqrt{\Delta f_{i}}\leq(\sqrt{c'}+L)\sqrt{\Delta f_{i}}\text{ for all }i\in V_{4}.
\end{eqnarray*}
So by the above inequality and \eqref{eq:norm-leq-sqrt}, we have\textbf{
\begin{align}
 & \begin{array}{c}
\underset{i\in V_{4}}{\sum}\big[\|[\mathbf{x}^{n,p(n,i)}]_{i}-[\mathbf{z}_{i}^{n,p(n,i)}]_{i}\|\|\Delta x_{i}\|+[f_{i}(\hat{x}_{i}^{n,p(n,i)})-f_{i,n,p(n,i)}([\mathbf{x}^{n,p(n,i)}]_{i})]\big]\end{array}\nonumber \\
\leq & \begin{array}{c}
\underset{i\in V_{4}}{\overset{\phantom{i\in V_{4}}}{\sum}}\big[[\|[\mathbf{x}^{n,p(n,i)}]_{i}-[\mathbf{z}_{i}^{n,p(n,i)}]_{i}\|+\sqrt{c'}+L]\sqrt{\Delta f_{i}}\big].\end{array}\label{eq:simplify-V4}
\end{align}
}

\section{Proof of convergence }

In this section, we present the proof of convergence rate for the
distributed Dykstra's algorithm. 

We need a plane geometry result for our proof. 
\begin{prop}
\label{prop:plane-geom}Refer to Figure \ref{fig:the-fig} for an
illustration. Consider two points $\bar{x}$ and $\hat{x}$, and let
$\hat{H}$ be the halfspace $\{\tilde{x}:\langle\bar{x}-\hat{x},\tilde{x}-\hat{x}\rangle\leq0\}$.
Suppose that $x$ is such that $\hat{x}$ lies in the halfspace $H:=\{\tilde{x}:\langle\bar{x}-x,\tilde{x}-x\rangle\leq0\}$.
Then $\|x-\hat{x}\|\leq\|\bar{x}-\hat{x}\|$, and $d(x,\hat{H})\geq\frac{\|x-\hat{x}\|^{2}}{\|\bar{x}-\hat{x}\|}$.
\end{prop}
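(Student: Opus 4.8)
The plan is to reduce everything to a single inner-product computation by introducing the two vectors $a := \bar{x} - x$ and $b := \hat{x} - x$ emanating from $x$. With this notation the hypothesis that $\hat{x} \in H$, namely $\langle \bar{x} - x, \hat{x} - x\rangle \leq 0$, becomes simply $\langle a, b\rangle \leq 0$; geometrically the angle at $x$ in the triangle with vertices $\bar{x}, x, \hat{x}$ is nonacute. Note also that $\bar{x} - \hat{x} = a - b$ and $x - \hat{x} = -b$, so both target quantities are expressible in terms of $a$ and $b$ alone.

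For the first inequality I would expand
\[
\|\bar{x} - \hat{x}\|^{2} = \|a - b\|^{2} = \|a\|^{2} - 2\langle a, b\rangle + \|b\|^{2} \geq \|b\|^{2} = \|x - \hat{x}\|^{2},
\]
where the inequality uses $\langle a, b\rangle \leq 0$ together with $\|a\|^{2} \geq 0$. Taking square roots gives $\|x - \hat{x}\| \leq \|\bar{x} - \hat{x}\|$.

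For the distance bound, I would first record the formula for the distance from a point to a halfspace: since $\hat{H} = \{\tilde{x} : \langle \bar{x} - \hat{x}, \tilde{x} - \hat{x}\rangle \leq 0\}$ has boundary hyperplane through $\hat{x}$ with outward normal $\bar{x} - \hat{x}$, the signed distance of $x$ to that hyperplane is $\langle \bar{x} - \hat{x}, x - \hat{x}\rangle / \|\bar{x} - \hat{x}\|$, and $d(x, \hat{H})$ equals this quantity whenever it is nonnegative (i.e.\ whenever $x \notin \hat{H}$). The crux is therefore the single inequality $\langle \bar{x} - \hat{x}, x - \hat{x}\rangle \geq \|x - \hat{x}\|^{2}$, which in the $a, b$ notation reads $\langle a - b, -b\rangle = \|b\|^{2} - \langle a, b\rangle \geq \|b\|^{2}$, again immediate from $\langle a, b\rangle \leq 0$. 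Dividing through by $\|\bar{x} - \hat{x}\|$ yields $d(x, \hat{H}) \geq \|x - \hat{x}\|^{2} / \|\bar{x} - \hat{x}\|$.

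There is essentially no serious obstacle here; the only point requiring a word of care is confirming that the signed-distance formula actually computes the distance, i.e.\ that $x$ lies outside (or on the boundary of) $\hat{H}$. But this is automatic: the inequality just established shows $\langle \bar{x} - \hat{x}, x - \hat{x}\rangle \geq \|x - \hat{x}\|^{2} \geq 0$, so $x \notin \intr \hat{H}$, and in the degenerate case $x = \hat{x}$ both sides of the desired bound vanish. One should also implicitly assume $\bar{x} \neq \hat{x}$ so that the normal $\bar{x} - \hat{x}$ is nonzero and the halfspace $\hat{H}$ is well defined; this is the only nondegeneracy hypothesis needed.
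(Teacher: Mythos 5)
Your proof is correct, but it takes a different route from the paper's. The paper argues synthetically in the plane: the hypothesis $\hat{x}\in H$ forces the angle at $x$ in the triangle $\bar{x}x\hat{x}$ to be at least $\pi/2$, hence $x$ lies in the ball with diameter $[\bar{x},\hat{x}]$ (Thales), which gives the first inequality; the distance bound is then obtained by arguing that the worst case occurs on the boundary sphere and invoking similarity of the triangles $\bar{x}x\hat{x}$ and $xx'\hat{x}$, where $x'$ is the foot of the perpendicular from $x$ to the segment $[\bar{x},\hat{x}]$. You instead set $a=\bar{x}-x$, $b=\hat{x}-x$, translate the hypothesis into $\langle a,b\rangle\leq 0$, and derive both conclusions from two one-line inner-product expansions together with the signed-distance formula for a halfspace. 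Your version buys something: it is coordinate-free, valid verbatim in any Hilbert space, and it replaces the paper's extremal step (``for a fixed value of $\|\hat{x}-x\|$, the smallest distance occurs when $x$ lies on the boundary of the sphere''), which is stated without justification, by the exact identity $\langle\bar{x}-\hat{x},x-\hat{x}\rangle=\|x-\hat{x}\|^{2}-\langle a,b\rangle$. You also correctly flag the two degeneracies (checking $x\notin\intr\hat{H}$ so the signed-distance formula applies, and the case $\bar{x}=\hat{x}$), which the paper passes over. The paper's argument has the advantage of matching Figure \ref{fig:the-fig} and conveying the geometric picture, but yours is the tighter proof.
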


\begin{proof}
It is clear that $\hat{x}\in H$ implies $\angle\bar{x}x\hat{x}\geq\pi/2$,
which is equivalent to $x$ being in the sphere with diameter $\|\bar{x}-\hat{x}\|$
and center $\frac{1}{2}(\bar{x}+\hat{x})$. Thus $\|x-\hat{x}\|\leq\|\bar{x}-\hat{x}\|$.
For a fixed value of $\|\hat{x}-x\|$, the smallest distance $d(x,\hat{H})$
occurs when $x$ lies on the boundary of the sphere. Let the projection
of $x$ onto the line segment connecting $\bar{x}$ and $\hat{x}$
be $x'$. The triangles $\bar{x}x\hat{x}$ and $xx'\hat{x}$ are similar,
which gives the lower bound $\frac{\|x-\hat{x}\|^{2}}{\|\bar{x}-\hat{x}\|}$
for $d$ as needed.
\end{proof}
\begin{figure}[!h]
\includegraphics[scale=0.2]{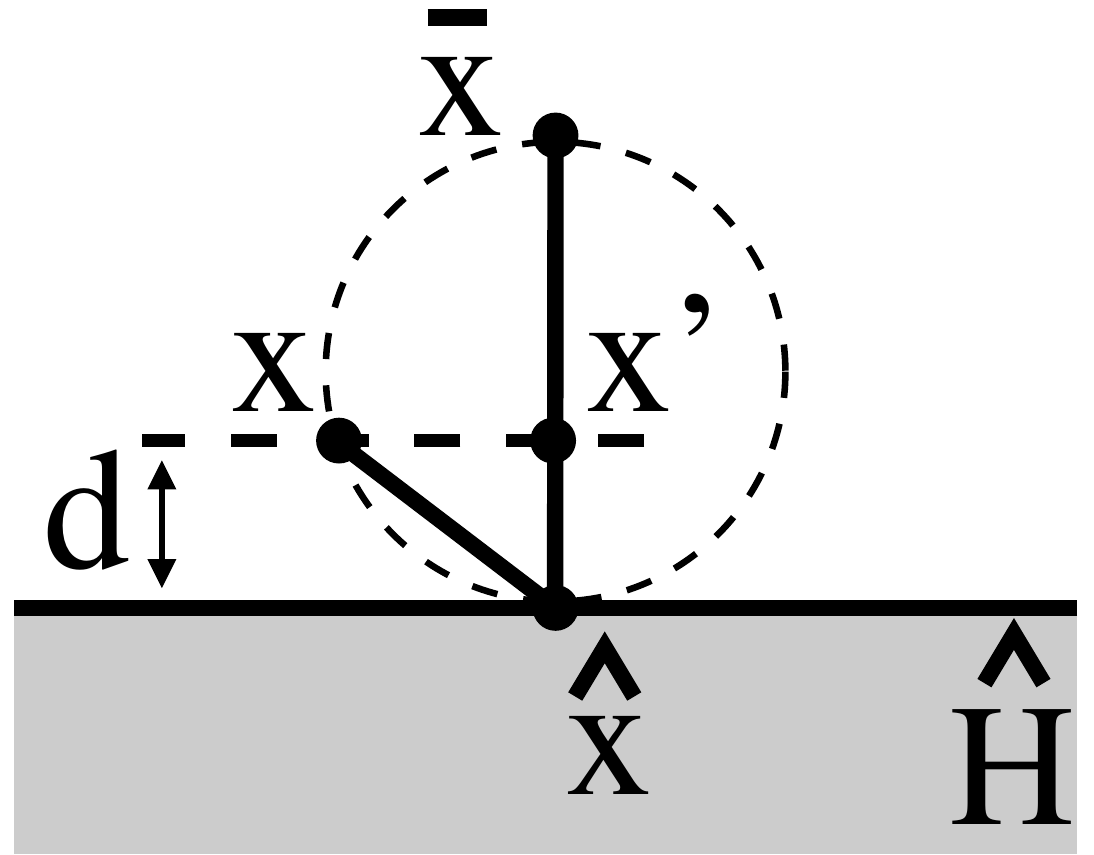}

\caption{\label{fig:the-fig}Diagram for Proposition \ref{prop:plane-geom}. }
\end{figure}

We write down the convergence result and its proof. 
\begin{thm}
Consider Algorithms \ref{alg:Ext-Dyk} and \ref{alg:subdiff-subalg}.
Suppose that Assumption \ref{assu:to-start-subalg} holds. Suppose
the iterates $\{\mathbf{z}_{\alpha}^{n,w}\}_{n,w}$ are bounded for
all $\alpha\in V\cup\bar{E}$, and that a minimizer $\mathbf{z}^{*}\in\mathbf{X}^{|V\cup\bar{E}|}$
exists. Suppose that the functions $g_{i}:X_{i}\to\mathbb{R}$ are
such that $\min_{x\in X_{i}}g_{i}(x)<0$ for all $i\in V_{5}$. Then
the values $\{h^{n}\}_{n}$ in \eqref{eq:def-h-n-values}, which measures
the rate at which the dual objective value converges to its optimal
value, converges to zero at an $O(1/n^{1/3})$ rate. 
\end{thm}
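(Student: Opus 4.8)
The plan is to establish a recurrence of the form $h^n \geq h^{n+1} + \theta(h^{n+1})^4$ for some $\theta > 0$ and then invoke Lemma \ref{lem:Beck-recur}(2). Two ingredients are needed: an \emph{upper} bound on the duality gap $h^{n+1}$ in terms of the primal--dual mismatches $\|\Delta x_i\|$ (for $i \in V_4 \cup V_5$) together with an error term $\sqrt{h^n - h^{n+1}}$, and a \emph{lower} bound showing that one step of cutting makes progress at least a constant multiple of $\sum_{i \in V_5}\|\Delta x_i\|^4$. Matching these produces the fourth power that yields the $O(1/n^{1/3})$ rate.

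For the upper bound, I would start from \eqref{eq:subdiff-for-h} and \eqref{eq:subdiff-part-2}, already recorded from \cite{Pang_rate_D_Dyk}. Using the boundedness of the iterates (assumed in the statement) together with the existence of $\mathbf{z}^*$, every coefficient of the form $\|\mathbf{z}_\alpha^* - \hat{\mathbf{z}}_\alpha^{n,\bar w}\|$, $\|\bar{\mathbf{x}} - \mathbf{v}_A^{n,\bar w}\|$ and $\|[\mathbf{x}^{n,p(n,i)}]_i - [\mathbf{z}_i^{n,p(n,i)}]_i\|$ is bounded by an absolute constant. Since $\hat{\mathbf{z}}_\alpha = \mathbf{z}_\alpha$ for every $\alpha \notin V_4 \cup V_5$, the quantities $\|\bar{\mathbf{t}}_\alpha - \hat{\mathbf{t}}_\alpha\|$ and $\|\mathbf{v}_A^{n,\bar w} - \hat{\mathbf{v}}_A^{n,\bar w}\|$ are each bounded by a constant times $\sum_{i \in V_4 \cup V_5}\|\Delta x_i\|$ via \eqref{eq:def-delta-z-i}. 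Combining these with the $V_4$ simplification \eqref{eq:simplify-V4} and $\|\Delta x_i\| \leq \sqrt{\Delta f_i}$ from \eqref{eq:norm-leq-sqrt}, and recalling $h^{n+1} = h^{n,\bar w} = -F^{n,\bar w}(\mathbf{z}^{n,\bar w}) + F(\mathbf{z}^*)$, I obtain a bound of the shape $h^{n+1} \leq c_1 \sum_{i \in V_5}\|\Delta x_i\| + c_1\sum_{i \in V_4}\sqrt{\Delta f_i} + c_1\sqrt{h^n - h^{n+1}}$, where the $\Delta x_i$ and $\Delta f_i$ are evaluated at iteration $n$.

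The genuinely new step is the $V_5$ lower bound. Fix $i \in V_5$ and apply Proposition \ref{prop:plane-geom} with $\bar x = \bar x_i^{n,p(n,i)}$, $x = [\mathbf{x}^{n,p(n,i)}]_i = P_{H_i}(\bar x)$ (projection onto the affine model halfspace $H_i = \{g_{i,n,p(n,i)} \leq 0\}$), and $\hat x = \hat x_i^{n,p(n,i)} = P_{C_i}(\bar x)$; since $C_i \subset H_i$ the hypothesis $\hat x \in H$ holds, so $d(x,\hat H) \geq \|\Delta x_i\|^2/\|\bar x - \hat x\|$. Feeding this into Lemma \ref{lem:3_pts}(3) and using boundedness to bound $\|\bar x - \hat x\|$ by a constant $M$ gives $g_i([\mathbf{x}^{n,p(n,i)}]_i) \geq (\gamma/M)\|\Delta x_i\|^2$. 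Next, by Assumption \ref{assu:to-start-subalg}(3) we have $S_{n+1,1} = V_4 \cup V_5$, and a short computation using Proposition \ref{prop:sparsity} and Assumption \ref{assu:to-start-subalg}(2) shows the linearization point of $g_i$ at step $(n+1,1)$ in \eqref{eq:linearize-f-i-n-w} is exactly $[\mathbf{x}^{n,p(n,i)}]_i$. Hence Lemma \ref{lem:3_pts}(4) gives $d([\mathbf{x}^{n,p(n,i)}]_i,\tilde C_i) \geq g_i(\cdot)/\tilde\gamma$, and Lemma \ref{lem:d_sqr_increase} applied node-by-node (the subproblem \eqref{eq:alg-primal-subpblm} being separable across $V_5$ by sparsity) together with Theorem \ref{thm:convergence}(i) yields $h^{n+1} - h^{n+2} \geq \tfrac12\|\mathbf{v}_A^{n+1,1} - \mathbf{v}_A^{n+1,0}\|^2 \geq c_2\sum_{i \in V_5} g_i(\cdot)^2 \geq c_3\sum_{i \in V_5}\|\Delta x_i\|^4$. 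The $V_4$ analogue $(\Delta f_i)^2 \leq c\,(h^{n+1}-h^{n+2})$ is taken from \cite{Pang_rate_D_Dyk}.

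Putting the pieces together, $\|\Delta x_i\| \leq c_4(h^{n+1}-h^{n+2})^{1/4}$ and $\sqrt{\Delta f_i} \leq c_4(h^{n+1}-h^{n+2})^{1/4}$, so the upper bound becomes $h^{n+1} \leq c_5(h^{n+1}-h^{n+2})^{1/4} + c_5(h^n - h^{n+1})^{1/2}$. I anticipate the main obstacle to be the clean conversion of this mixed-index inequality into the single-index recurrence required by Lemma \ref{lem:Beck-recur}(2): the error term lives on the gap $[n,n+1]$ while the progress term lives on $[n+1,n+2]$. I would handle this by bounding both differences by $h^n - h^{n+2}$ and, for small $h$, absorbing the lower-order $(\cdot)^{1/2}$ term, obtaining $(h^{n+1})^4 \leq c_6(h^n - h^{n+2})$. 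Since $\{h^n\}_n$ is nonincreasing by Theorem \ref{thm:convergence}(i), this gives $h^n \geq h^{n+2} + c_6^{-1}(h^{n+2})^4$; applying Lemma \ref{lem:Beck-recur}(2) to the even- and odd-indexed subsequences separately then yields $h^n = O(1/n^{1/3})$.
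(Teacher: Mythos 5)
Your proposal is correct and follows essentially the same route as the paper: the same upper bound $h^{n+1}\leq c_1\sqrt{h^n-h^{n+1}}+\sum c_2\sqrt{\Delta f_i}+\sum c_3\|\Delta x_i\|$, the same lower bound on the per-sweep progress via Proposition \ref{prop:plane-geom}, Lemma \ref{lem:3_pts}(3)--(4) and Lemma \ref{lem:d_sqr_increase} giving $h^{n+1}-h^{n+2}\geq c\sum_{i\in V_5}\|\Delta x_i\|^4+c\sum_{i\in V_4}[\Delta f_i]^2$, and the same two-step recurrence fed into Lemma \ref{lem:Beck-recur}(2). Your absorption of the $(h^n-h^{n+1})^{1/2}$ term using the uniform bound on $h^n$ replaces the paper's explicit two-case split but is equivalent, and your explicit appeal to the even- and odd-indexed subsequences is if anything slightly more careful than the paper's final step.
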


\begin{proof}
Recall the $\bar{x}_{i}^{n,p(n,i)}$ in \eqref{eq:small-orig-dual}.
We see that $\bar{x}_{i}^{n,p(n,i)}\overset{\eqref{eq:Moreau}}{=}[\mathbf{x}^{n,p(n,i)}]_{i}+[\mathbf{z}_{i}^{n,p(n,i)}]_{i}$
for all $i\in V$. Since $[\mathbf{x}^{n,p(n,i)}]_{i}$ is bounded
by Theorem \ref{thm:convergence}(ii) and \eqref{eq_m:all_acronyms},
and $[\mathbf{z}_{i}^{n,p(n,i)}]_{i}$ was assumed to be bounded,
$\bar{x}_{i}^{n,p(n,i)}$ is bounded. In view of \eqref{eq:small-sub-primal},
$\hat{x}_{i}^{n,p(n,i)}$ is the projection of $\bar{x}_{i}^{n,p(n,i)}$
onto $C_{i}=\{x:g_{i}(x)\leq0\}$. The projection of a bounded set
onto a closed convex set is bounded, so $\hat{x}_{i}^{n,p(n,i)}$
is also bounded. The point $[\mathbf{x}^{n,p(n,i)}]_{i}$ is the projection
of $\bar{x}_{i}^{n,p(n,i)}$ onto a superset of $C_{i}$, and so Proposition
\ref{prop:plane-geom} can be applied, with $\bar{x}$ being $\bar{x}_{i}^{n,p(n,i)}$,
$\hat{x}$ being $\hat{x}_{i}^{n,p(n,i)}$ and $x$ being $[\mathbf{x}^{n,p(n,i)}]_{i}$.
This shows that $\Delta x_{i}$ (defined in \eqref{eq:def-delta-z-i})
and $\hat{z}_{i}^{n,p(n,i)}\overset{\eqref{eq:def-delta-z-i}}{=}[\mathbf{z}_{i}^{n,p(n,i)}]_{i}+\Delta x_{i}$
are bounded for all $i\in V_{5}$. From here, we deduce that $\|[\mathbf{x}^{n,p(n,i)}]_{i}-[\mathbf{z}_{i}^{n,p(n,i)}]_{i}\|$
and $\|\bar{\mathbf{x}}-\mathbf{v}_{A}^{n,\bar{w}}\|$ are bounded
for all $i\in V_{5}$ and $\alpha\in V\cup\bar{E}$. Let $\mathbf{z}^{*}\in\mathbf{X}^{|V\cup\bar{E}|}$
be any optimal solution to \eqref{eq:general-dual}. We then have
$\|\mathbf{z}_{\alpha}^{*}-\hat{\mathbf{z}}_{\alpha}^{n,w}\|$ being
bounded for all $\alpha\in V\cup\bar{E}$.

The quantity $\|\mathbf{v}_{A}^{n,\bar{w}}-\hat{\mathbf{v}}_{A}^{n,\bar{w}}\|$
is bounded by a multiple of $\sum_{i\in V_{5}}\|\Delta x_{i}\|+\sum_{i\in V_{4}}\sqrt{\Delta f_{i}}$
by \eqref{eq:def-delta-z-i}, \eqref{eq:v-hat}, \eqref{eq_m:all_acronyms}
and \eqref{eq:norm-leq-sqrt}. We note that $\hat{x}_{i}^{n,p(n,i)}\in C_{i}$
by \eqref{eq:small-sub-primal} and the discussion after \eqref{eq_m:h-a-n-w},
and $[\mathbf{x}^{n,p(n,i)}]_{i}\in\{x:g_{i,n,w}(x)\leq0\}$ by \eqref{eq:small-orig-primal}
and the discussion after \eqref{eq_m:h-a-n-w}, so $f_{i}(\hat{x}_{i}^{n,p(n,i)})=f_{i,n,p(n,i)}([\mathbf{x}^{n,p(n,i)}]_{i})=0$
for all $i\in V_{5}$, which gives 

\textbf{
\begin{align}
 & \begin{array}{c}
\underset{i\in V_{5}}{\sum}\big[\|[\mathbf{x}^{n,p(n,i)}]_{i}-[\mathbf{z}_{i}^{n,p(n,i)}]_{i}\|\|\Delta x_{i}\|+[f_{i}(\hat{x}_{i}^{n,p(n,i)})-f_{i,n,p(n,i)}([\mathbf{x}^{n,p(n,i)}]_{i})]\big]\end{array}\nonumber \\
= & \begin{array}{c}
\underset{i\in V_{5}}{\sum}\big[\|[\mathbf{x}^{n,p(n,i)}]_{i}-[\mathbf{z}_{i}^{n,p(n,i)}]_{i}\|\|\Delta x_{i}\|\big]\end{array}\label{eq:simplify-V5}
\end{align}
}Then there are constants $c_{1}$, $c_{2}$ and $c_{3}$ such that
\begin{eqnarray}
h^{n+1} & \overset{\eqref{eq:def-h-n-values}}{=} & \begin{array}{c}
-[F^{n,\bar{w}}(\mathbf{z}^{n,\bar{w}})-F(\hat{\mathbf{z}}^{n,\bar{w}})]-[F(\hat{\mathbf{z}}^{n,\bar{w}})-F(\mathbf{z}^{*})]\end{array}\nonumber \\
 & \overset{\eqref{eq:subdiff-for-h},\eqref{eq:subdiff-part-2},\eqref{eq:simplify-V4},\eqref{eq:simplify-V5}}{\leq} & \begin{array}{c}
c_{1}\sqrt{h^{n}-h^{n+1}}+\underset{i\in V_{4}}{\overset{\phantom{V_{5}}}{\sum}}c_{2}\sqrt{\Delta f_{i}}+\underset{i\in V_{5}}{\overset{\phantom{V_{5}}}{\sum}}c_{3}\|\Delta x_{i}\|.\end{array}\label{eq:h-n-late}
\end{eqnarray}
The next step is to show how $\|\Delta x_{i}\|$ can be related to
the decrease in $\{h^{n,w}\}$. We want to show that 
\begin{equation}
\begin{array}{c}
h^{n+1,1}\leq h^{n+1,0}-c_{4}\underset{i\in V_{4}}{\overset{\phantom{i\in V_{4}}}{\sum}[}\Delta f_{i}]^{2}-\underset{i\in V_{5}}{\overset{\phantom{i\in V_{4}}}{\sum}}c_{5}\|\Delta x_{i}\|^{4}.\end{array}\label{eq:h_n_plus_1_recur}
\end{equation}
The dual objective function that we have at iteration $(n,w)$ is
\eqref{eq:h-def-1}. Note that $S_{n+1,1}$ satisfies $S_{n+1,1}\subset V$
by Assumption \ref{assu:to-start-subalg}(3), and $\delta_{H_{e}}(\mathbf{z}_{e}^{n+1,1})$
is already finite for all $e\in\bar{E}$, and by the sparsity of the
$\mathbf{z}_{i}$'s (Proposition \ref{prop:sparsity}), maximizing
\eqref{eq:h-def-1} is equivalent to minimizing 
\begin{equation}
\underset{[\mathbf{z}_{j}]_{j}\in X_{j},j\in V_{4}\cup V_{5}}{\min}\underset{i\in S_{n,w}}{\sum}\underbrace{\Big[\frac{1}{2}\|\bar{x}_{i}^{n+1,0}-[\mathbf{z}_{i}]_{i}\|^{2}+f_{i,n+1,1}^{*}([\mathbf{z}_{i}]_{i})\Big]},\label{eq:dual-fn-sum-form}
\end{equation}
where $\bar{x}_{i}^{n+1,0}$ be $[\bar{\mathbf{x}}]_{i}-\sum_{\beta\neq i}\mathbf{z}_{\beta}^{n+1,0}=\mathbf{x}_{i}^{n+1,0}+[\mathbf{z}_{i}^{n+1,0}]_{i}$.
We can look separately at the dual problems underbraced in the above
problem. The dual of these problems are, up to a sign change and a
constant, 
\[
\begin{array}{c}
\underset{x_{i}\in X_{i}}{\min}\frac{1}{2}\|\bar{x}_{i}^{n+1,0}-x_{i}\|^{2}+f_{i,n+1,1}(x_{i}).\end{array}
\]
We now treat the case of $i\in V_{5}$. Recall that \begin{subequations}\label{eq_m:proj_subpblms}
\begin{eqnarray}
[\mathbf{x}^{n+1,0}]_{i} & = & \begin{array}{c}
\arg\min_{x_{i}\in X_{i}}\frac{1}{2}\|x_{i}-\bar{x}_{i}^{n+1,0}\|^{2}+f_{i,n+1,0}(x_{i}),\end{array}\label{eq:proj-subpblms-1}\\
\hat{x}_{i}^{n+1,0} & = & \begin{array}{c}
\arg\min_{\hat{x}_{i}\in X_{i}}\frac{1}{2}\|\hat{x}_{i}-\bar{x}_{i}^{n+1,0}\|^{2}+f_{i}(\hat{x}_{i}),\end{array}\label{eq:proj-subpblms-2}\\{}
[\mathbf{x}^{n+1,1}]_{i} & = & \begin{array}{c}
\arg\min_{x_{i}\in X_{i}}\frac{1}{2}\|x_{i}-\bar{x}_{i}^{n+1,0}\|^{2}+f_{i,n+1,1}(x_{i}),\end{array}\label{eq:proj-subpblms-3}
\end{eqnarray}
\end{subequations}Since $f_{i,n+1,0}$, $f_{i}$ and $f_{i,n+1,1}$
are indicator functions, the primal objective values of the problems
are $\frac{1}{2}\|\bar{x}_{i}^{n+1,0}-[\mathbf{x}^{n+1,0}]_{i}\|^{2}$,
$\frac{1}{2}\|\bar{x}_{i}^{n+1,0}-\hat{x}_{i}^{n+1,0}\|^{2}$ and
$\frac{1}{2}\|\bar{x}_{i}^{n+1,0}-[\mathbf{x}^{n+1,1}]_{i}\|^{2}$
respectively. Let the halfspace $\hat{H}_{i}^{n+1,0}$ be 
\[
\hat{H}_{i}^{n+1}=\{x\in X_{i}:\langle\bar{x}_{i}^{n+1,0}-\hat{x}_{i}^{n+1,0},x-\hat{x}_{i}^{n+1,0}\rangle\}.
\]
In view of Assumption \ref{assu:to-start-subalg}(2), we have $\Delta x_{i}\overset{\eqref{eq:def-delta-z-i}}{=}[\mathbf{x}^{n+1,0}]_{i}-\hat{x}_{i}^{n+1,0}$.
Define $H_{i}^{n+1}$ to be the halfspace separating $[\mathbf{x}^{n+1,0}]_{i}$
from the set $C_{i}$ formed in Algorithm \ref{alg:subdiff-subalg}.
Recall that $\bar{x}_{i}^{n+1,0}$ and $[\mathbf{x}^{n+1,0}]_{i}$
are bounded inside some set for all $i\in V$, say $R$. Let $\gamma=\sup\{\|s\|:s\in\partial g_{i}(x),i\in V\}$
and $\tilde{\gamma}=\inf\{\|s\|:s\in\partial g_{i}(x),i\in V\}$,
which both have to be finite numbers by arguments in Lemma \ref{lem:3_pts}.
The boundedness of $\|\bar{x}_{i}^{n+1,0}-\hat{x}_{i}^{n+1,0}\|$
as mentioned earlier shows that there is a constant $c_{5}$ such
that for all $n\geq0$, $c_{5}>0$ and $\frac{\gamma}{\tilde{\gamma}\|\bar{x}_{i}^{n+1,0}-\hat{x}_{i}^{n+1,0}\|}\geq c_{5}$.
We have 
\begin{eqnarray}
\begin{array}{c}
d([\mathbf{x}^{n+1,0}]_{i},H_{i}^{n+1})\end{array} & \overset{\scriptsize{\text{Lem. \ref{lem:3_pts}(4)}}}{\geq} & \begin{array}{c}
\frac{1}{\tilde{\gamma}}g_{i}([\mathbf{x}^{n+1,0}]_{i})\overset{\scriptsize{\text{Lem \ref{lem:3_pts}(3)}}}{\underset{\phantom{\scriptsize{\text{Lem \ref{lem:3_pts}(3)}}}}{\geq}}\frac{\gamma}{\tilde{\gamma}}d(x_{i}^{n+1,0},\hat{H}_{i}^{n+1,0})\end{array}\nonumber \\
 & \overset{\scriptsize{\text{Prop \ref{prop:plane-geom}}}}{\geq} & \begin{array}{c}
\frac{\gamma}{\tilde{\gamma}}\frac{\|\Delta x_{i}\|^{2}}{\|\bar{x}_{i}^{n+1,0}-\hat{x}_{i}^{n+1,0}\|_{\phantom{2}}}\geq c_{5}\|\Delta x_{i}\|^{2}.\end{array}\label{eq:est-dist-recur}
\end{eqnarray}
By Lemma \ref{lem:d_sqr_increase}, we have
\begin{eqnarray}
\begin{array}{c}
\|\bar{x}_{i}^{n+1,0}-[\mathbf{x}^{n+1,1}]_{i}\|^{2}\end{array} & \overset{\scriptsize{\text{Lem \ref{lem:d_sqr_increase}}}}{\geq} & \begin{array}{c}
\|\bar{x}_{i}^{n+1,0}-[\mathbf{x}^{n+1,0}]_{i}\|^{2}+d([\mathbf{x}^{n+1,0}]_{i},H^{n+1})^{2}\end{array}\nonumber \\
 & \overset{\eqref{eq:est-dist-recur}}{\geq} & \begin{array}{c}
\|\bar{x}_{i}^{n+1,0}-[\mathbf{x}^{n+1,0}]_{i}\|^{2}+c_{5}\|\Delta x_{i}\|^{4}.\end{array}\label{eq:dist-inc-step}
\end{eqnarray}
Note that $f_{i,n+1,1}([\mathbf{x}^{n+1,1}]_{i})=f_{i,n+1,0}([\mathbf{x}^{n+1,0}]_{i})=0$,
which gives the values of the optimization problems in \eqref{eq_m:proj_subpblms}.
The strong duality between the primal problems of the type \eqref{eq_m:proj_subpblms}
and its dual (of the type \eqref{eq:p-d-1-case}) implies that 
\begin{eqnarray}
 &  & \begin{array}{c}
\frac{1}{2}\|\bar{x}_{i}^{n+1,0}-[\mathbf{x}^{n+1,1}]_{i}\|^{2}\end{array}\label{eq:proj-link-1}\\
 & = & \begin{array}{c}
\frac{1}{2}\|\bar{x}_{i}^{n+1,0}-[\mathbf{x}^{n+1,1}]_{i}\|^{2}+f_{i,n+1,1}([\mathbf{x}^{n+1,1}]_{i})\end{array}\nonumber \\
 & = & \begin{array}{c}
\frac{1}{2}\|\bar{x}_{i}^{n+1,0}\|^{2}-\frac{1}{2}\|\bar{x}_{i}^{n+1,0}-[\mathbf{z}_{i}^{n+1,1}]_{i}\|^{2}-f_{i,n+1,1}^{*}([\mathbf{z}_{i}^{n+1,1}]_{i})\end{array}.\nonumber 
\end{eqnarray}
There is a similar equation for $[\mathbf{x}^{n+1,0}]_{i}$. Combining
\eqref{eq:dist-inc-step} and \eqref{eq:proj-link-1} gives us 
\begin{eqnarray}
 &  & \begin{array}{c}
\frac{1}{2}\|\bar{x}_{i}^{n+1,0}-[\mathbf{z}_{i}^{n+1,1}]_{i}\|^{2}+f_{i,n+1,1}^{*}([\mathbf{z}_{i}^{n+1,1}]_{i})\end{array}\label{eq:component-for-V5}\\
 & \overset{\eqref{eq:dist-inc-step},\eqref{eq:proj-link-1}}{\leq} & \begin{array}{c}
\frac{1}{2}\|\bar{x}_{i}^{n+1,0}-[\mathbf{z}_{i}^{n+1,0}]_{i}\|^{2}+f_{i,n+1,0}^{*}([\mathbf{z}_{i}^{n+1,0}]_{i})-c_{5}\|\Delta x_{i}\|^{4}\text{ for all }i\in V_{5}.\end{array}\nonumber 
\end{eqnarray}
For the case when $i\in V_{4}$, an inequality similar to \eqref{eq:component-for-V5}
was obtained in \cite{Pang_rate_D_Dyk}, but the last term would be
replaced by $c_{4}[\Delta f_{i}]^{2}$ instead, where $c_{4}>0$ is
some constant. Summing up the inequalities of the form \eqref{eq:component-for-V5}
over all $i$ (note that if $i\in V_{1}\cup V_{2}\cup V_{3}$, $[\mathbf{z}_{i}^{n+1,0}]_{i}=[\mathbf{z}_{i}^{n+1,1}]_{i}$)
and that the dual \eqref{eq:h-def-1} can be written as the sum \eqref{eq:dual-fn-sum-form},
we have \eqref{eq:h_n_plus_1_recur} as needed.  

We now consider 2 cases:

\textbf{Case 1:} If $h^{n+1}\leq2c_{1}\sqrt{h^{n}-h^{n+1}}$, then
choose $\bar{c}>0$ such that $h^{n}\leq\bar{c}$ for all $n\geq1$.
So
\begin{equation}
\begin{array}{c}
h^{n}\overset{\scriptsize{\text{Case 1}}}{\geq}h^{n+1}+\frac{1}{4c_{1}^{2}}[h^{n+1}]^{2}\geq h^{n+2}+\frac{1}{4c_{1}^{2}}[h^{n+2}]^{2}\geq h^{n+2}+\frac{1}{4\bar{c}^{2}c_{1}^{2}}[h^{n+2}]^{4}.\end{array}\label{eq:recur_t1}
\end{equation}

\textbf{Case 2:} If $h^{n+1}\geq2c_{1}\sqrt{h^{n}-h^{n+1}}$, then\textbf{}
\begin{equation}
\begin{array}{c}
\underset{i\in V_{4}}{\overset{\phantom{i\in V_{4}}}{\sum}}c_{2}\sqrt{\Delta f_{i}}+\underset{i\in V_{5}}{\overset{\phantom{i\in V_{4}}}{\sum}}c_{3}\|\Delta x_{i}\|\overset{\eqref{eq:h-n-late}}{\geq}h^{n+1}-c_{1}\sqrt{h^{n}-h^{n+1}}\overset{\scriptsize{\text{Case 2}}}{\geq}\frac{1}{2}h^{n+1}.\end{array}\label{eq:low_power_surd}
\end{equation}
By the power means inequality, we have 
\begin{equation}
\begin{array}{c}
c_{4}\underset{i\in V_{4}}{\overset{\phantom{1\in V_{5}}}{\sum}[}\Delta f_{i}]^{2}+c_{5}\underset{i\in V_{5}}{\sum}\|\Delta x_{i}\|^{4}\geq\frac{1}{(|V_{4}|+|V_{5}|)^{3}}\Big[c_{4}^{1/4}\underset{i\in V_{4}}{\overset{\phantom{1\in V_{5}}}{\sum}}\sqrt{\Delta f_{i}}+c_{5}^{1/4}\underset{i\in V_{5}}{\sum}\|\Delta x_{i}\|\Big]^{4}.\end{array}\label{eq:high_power_surd}
\end{equation}
Incorporating \eqref{eq:low_power_surd} into \eqref{eq:high_power_surd}
shows that there is some $c_{6}>0$ such that
\begin{equation}
\begin{array}{c}
c_{4}\underset{i\in V_{4}}{\overset{\phantom{1\in V_{5}}}{\sum}[}\Delta f_{i}]^{2}+c_{5}\underset{i\in V_{5}}{\sum}\|\Delta x_{i}\|^{4}\geq c_{6}(h^{n+1})^{4}\end{array}\label{eq:ineq_h_4}
\end{equation}
Hence 
\begin{eqnarray}
\begin{array}{c}
h^{n+2}\end{array} & \leq & \begin{array}{c}
h^{n+1,1}\overset{\eqref{eq:h_n_plus_1_recur}}{\leq}h^{n+1,0}-c_{4}\underset{i\in V_{4}}{\overset{\phantom{1\in V_{5}}}{\sum}}[\Delta f_{i}]^{2}-c_{5}\underset{i\in V_{5}}{\sum}\|\Delta x_{i}\|^{4}\end{array}\nonumber \\
 & \overset{\eqref{eq:ineq_h_4}}{\leq} & \begin{array}{c}
h^{n+1}-c_{6}[h^{n+1}]^{4}\leq h^{n}-c_{6}[h^{n+2}]^{4}.\end{array}\label{eq:recur_t2}
\end{eqnarray}
Combining with \eqref{eq:recur_t1}, we have $h^{n+2}\leq h^{n}-\min\{c_{6},4\bar{c}^{2}c_{1}^{2}\}[h^{n+2}]^{4}$.
By Lemma \ref{lem:Beck-recur}(2), this recurrence guarantees a $O(1/n^{1/3})$
convergence of $\{h^{n}\}_{n}$. 
\end{proof}

\section{Numerical experiments}

In this section, we present the results of our numerical experiments
to verify the effectiveness of Algorithm \ref{alg:Ext-Dyk}.

We conduct 4 different sets of numerical experiments, and we now explain
their common features. Just like in \cite{Pang_rate_D_Dyk}, we look
at the graph where $|V|=5$ and $E=\{\{1,2\},\{1,3\},\{1,4\},\{1,5\}\}$.
We look at the setting of Example \ref{exa:distrib-dyk} where $X_{i}=\mathbb{R}^{m}$
and $m=10$ for all $i\in V$, and look at hyperplanes of the form
\[
H_{(i,j)}=\{\mathbf{{x}}\in\mathbf{{X}}:\mathbf{x}_{i}=\mathbf{\mathbf{x}}_{j}\}
\]
instead of the hyperplanes $H_{((i,j),k)}$ defined in \eqref{eq:H-alpha-subspaces}
to simplify computations. Our code is equivalent to $\bar{w}=8$ with
\begin{align*}
 & S_{n,1}=\{(1,2)\},S_{n,2}=\{1,2\},S_{n,3}=\{(1,3)\},S_{n,4}=\{1,3\},\\
 & S_{n,5}=\{(1,4)\},S_{n,6}=\{1,4\},S_{n,7}=\{(1,5)\},\text{ and }S_{n,8}=\{1,5\}.
\end{align*}
Let $\mathbf{e}$ be \texttt{ones(m,1)}. First, we find $\{v_{i}\}_{i\in V}$
and $\bar{x}$ such that $\sum_{i\in V}v_{i}+|V|(\mathbf{e}-\bar{x})=0$.
We then find closed convex functions $f_{i}(\cdot)$ such that $v_{i}\in\partial f_{i}(\mathbf{e})$.
It is clear from the KKT conditions that $\mathbf{e}$ is the primal
optimum solution to \eqref{eq:distrib-dyk-primal-pblm} if $[\bar{\mathbf{x}}]_{i}$
are all equal to $\bar{x}$ for all $i\in V$. 

The $f_{i}(\cdot)$ can be defined as either smooth or nonsmooth functions,
or as the indicator functions of level sets of smooth or nonsmooth
functions. They are described using some Matlab functions below. 
\begin{itemize}
\item [(F-S)]$f_{i}(x):=\frac{1}{2}x^{T}A_{i}x+b_{i}^{T}x+c_{i}$, where
$A_{i}$ is of the form $vv^{T}+rI$, where $v$ is generated by \texttt{rand(m,1)},
$r$ is generated by \texttt{rand(1)}. $b_{i}$ is chosen to be such
that $v_{i}=\nabla f(\mathbf{e})$, and $c_{i}=0$. 
\item [(F-NS)]$f_{i}(x):=\max\{f_{i,1}(x),f_{i,2}(x)\}$, where $f_{i,j}(x):=\frac{1}{2}x^{T}A_{i}x+b_{i,j}^{T}x+c_{i,j}$
for $j\in\{1,2\}$, $A_{i}$ is of the form $vv^{T}+rI$, where $v$
is generated by \texttt{rand(m,1)}, $r$ is generated by \texttt{rand(1)},
$b_{i,1}$ and $b_{i,2}$ are chosen such that $v_{i}=\frac{1}{2}[\nabla f_{i,1}(\mathbf{e})+\nabla f_{i,2}(\mathbf{e})]$
but $v_{i}$ is neither $\nabla f_{i,1}(\mathbf{e})$ nor $\nabla f_{i,2}(\mathbf{e})$,
and $c_{i,1}$ and $c_{i,2}$ are chosen such that $f_{i,1}(\mathbf{e})=f_{i,2}(\mathbf{e})$. 
\item [(LS-S)]$f_{i}(\cdot)=\delta_{\{x:g_{i}(x)\leq0\}}(\cdot)$, where
$g_{i}(x):=\frac{1}{2}x^{T}A_{i}x+b_{i}^{T}x+c_{i}$, $A_{i}$ is
of the form $vv^{T}+rI$, where $v$ is generated by \texttt{rand(m,1)},
$r$ is generated by \texttt{rand(1)}, and $b_{i}$ and $c_{i}$ are
chosen such that $g_{i}(\mathbf{e})=0$ and $v_{i}=\nabla g_{i}(\mathbf{e})$. 
\item [(LS-NS)]$f_{i}(\cdot)=\delta_{\{x:g_{i}(x)\leq0\}}(\cdot)$, where
$g_{i}(x):=\max\{g_{i,1}(x),g_{i,2}(x)\}$, $g_{i,j}(x):=\frac{1}{2}x^{T}A_{i,j}x+b_{i,j}^{T}x+c_{i,j}$
for $j\in\{1,2\}$, $A_{i,1}$ and $A_{i,2}$ are of the form $vv^{T}+rI$,
where $v$ is generated by \texttt{rand(m,1)}, $r$ is generated by
\texttt{rand(1)}, $b_{i,1}$ and $b_{i,2}$ are chosen such that $v_{i}=\frac{1}{2}[\nabla g_{i,1}(\mathbf{e})+\nabla g_{i,2}(\mathbf{e})]$
but $v_{i}$ is neither $\nabla g_{i,1}(\mathbf{e})$ nor $\nabla g_{i,2}(\mathbf{e})$,
and $g_{i,1}(\mathbf{e})=g_{i,2}(\mathbf{e})=0$. 
\end{itemize}
Note that in (F-S) and (LS-NS), the $b_{i,1}$ and $b_{i,2}$, as
well as $c_{i,1}$ and $c_{i,2}$ are not uniquely defined. We refer
to the source code to see how they are defined. For all the experiments,
we investigate the convergence behavior of $\frac{1}{2}\|x-x^{*}\|^{2}$
and the \textbf{\uline{duality gap}} defined by 
\[
\begin{array}{c}
\frac{1}{2}\|x^{n,w}\|^{2}+\underset{i\in V}{\sum}f_{i,n,w}^{*}([\mathbf{z}_{i}^{n,w}]_{i})-\Big[\frac{1}{2}\|x^{*}\|^{2}+\underset{i\in V}{\sum}f_{i}^{*}([\mathbf{z}_{i}^{*}]_{i})\Big],\end{array}
\]
where $(x^{*},\mathbf{z}^{*})$ is a dual optimal solution. It is
known that the duality gap is an upper bound for $\frac{1}{2}\|x-x^{*}\|^{2}$,
something which is verified in all our experiments. 

For the first set of experiments, we choose $f_{i}(\cdot)$ such that
$f_{i}(\cdot)$ are of the form (F-S) for all $i\in V$. 

For the second set of experiments, we choose $f_{i}(\cdot)$ to be
of the form (LS-S) for $i\in\{2,3\}$, and $f_{i}(\cdot)$ to be of
the form (F-S) for $i\in\{1,4,5\}$.

For the third set of experiments, we choose $f_{i}(\cdot)$ to be
of the form (LS-NS) for $i\in\{2,3\}$, and $f_{i}(\cdot)$ to be
of the form (F-S) for $i\in\{1,4,5\}$.

For the last set of experiments, we choose $f_{i}(\cdot)$ to be of
the form (LS-NS) for $i\in\{2,3\}$, and $f_{i}(\cdot)$ to be of
the form (F-NS) for $i\in\{1,4,5\}$.

In all the sets of experiments, we experiment over the cases when
all the $f_{i}(\cdot)$ marked to be in (F-S) or (F-NS) are either
all treated as subdifferentiable functions, or all treated as proximable
functions (i.e., either $V=V_{4}$, or $V=V_{1}$), and investigate
the behavior of both the duality gap and $\frac{1}{2}\|x-x^{*}\|^{2}$. 

We now elaborate on Figure \ref{fig:exp-1-2}. The two diagrams in
Figure \ref{fig:exp-1-2} show semi-log plots for the values of the
duality gap and $\frac{1}{2}\|x-x^{*}\|^{2}$ when the functions are
either all treated as subdifferentiable functions, or all treated
as proximable functions, with the first diagram corresponding to experiment
1 and the second diagram corresponding to experiment 2. There is a
(relatively fast) linear convergence of all values for the first set
of experiments, and a (relatively slow) linear convergence for the
second set of experiments. The former is consistent with the theory
in \cite{Pang_rate_D_Dyk}, while the latter is much better than the
$O(1/k^{1/3})$ rate that this paper suggests.

\begin{figure}[h]
\includegraphics[scale=0.4]{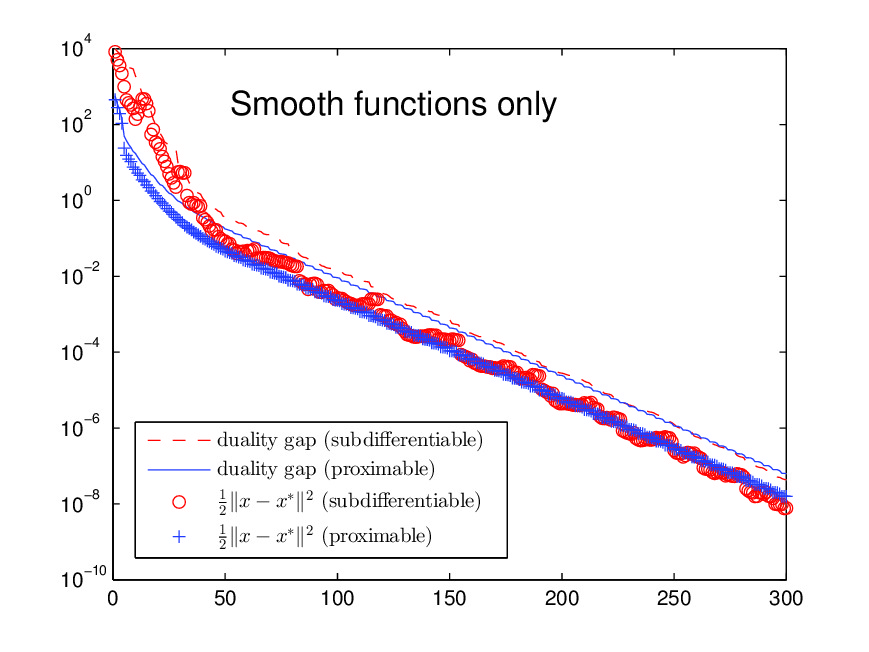}\includegraphics[scale=0.4]{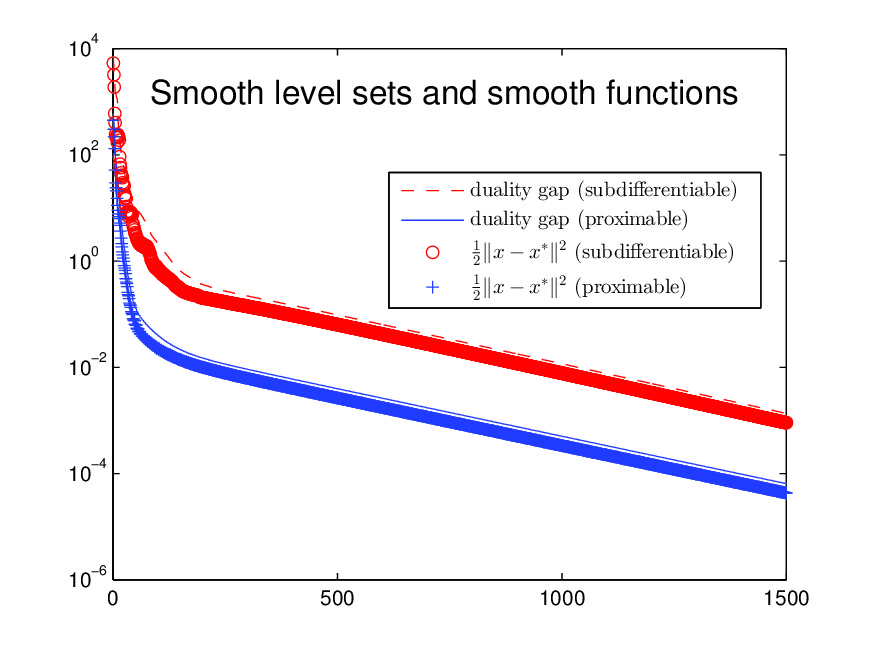}

\includegraphics{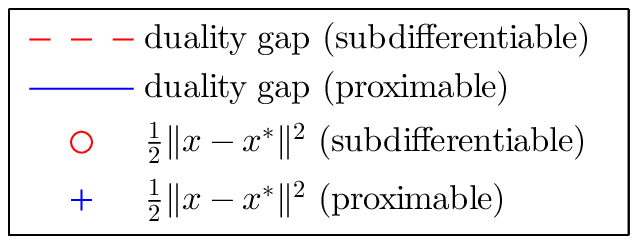}

\caption{\label{fig:exp-1-2}Semi-log plots for Experiments 1 and 2. Commentary
in main text. A magnified legend for both diagrams, which is also
the legend for the first diagrams in Figures \ref{fig:exp-3} and
\ref{fig:exp-4}, is also presented.}
\end{figure}

We now elaborate on Figure \ref{fig:exp-3}, which describes a typical
output from the third set of experiments. In the second diagram, a
plot of the reciprocal of the duality gaps for the cases when we treat
the smooth functions as proximable and subdifferentiable functions
gives straight lines, which shows an $O(1/k)$ convergence of the
duality gap. This is better than the $O(1/k^{1/3})$ rate proved in
this paper. In the third and fourth diagrams, the plots of $[\frac{1}{2}\|x-x^{*}\|^{2}]^{-1/2}$
look like a union of straight lines, which shows the $O(1/k^{2})$
convergence of $\frac{1}{2}\|x-x^{*}\|^{2}$. This cannot yet be explained
by the theory in both this paper and \cite{Pang_rate_D_Dyk}, where
the upper bound on the convergence rate we have is $O(1/k^{1/3})$.
There is also no noticeable performance improvement if we treat the
smooth functions as a proximable function instead of a subdifferentiable
function for both the duality gap and $\frac{1}{2}\|x-x^{*}\|^{2}$. 

\begin{figure}[h]
\includegraphics[scale=0.4]{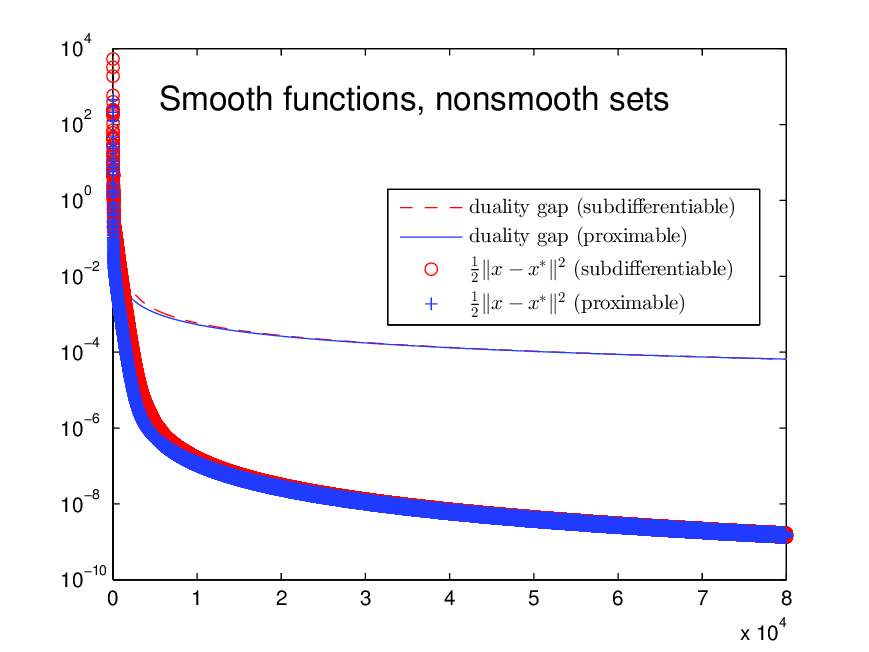}\includegraphics[scale=0.4]{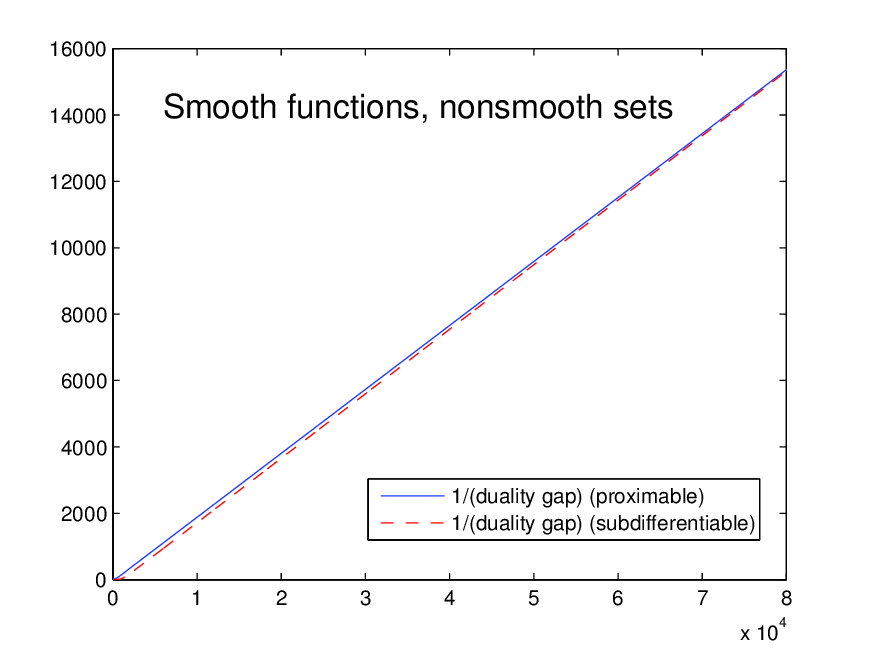}

\includegraphics[scale=0.4]{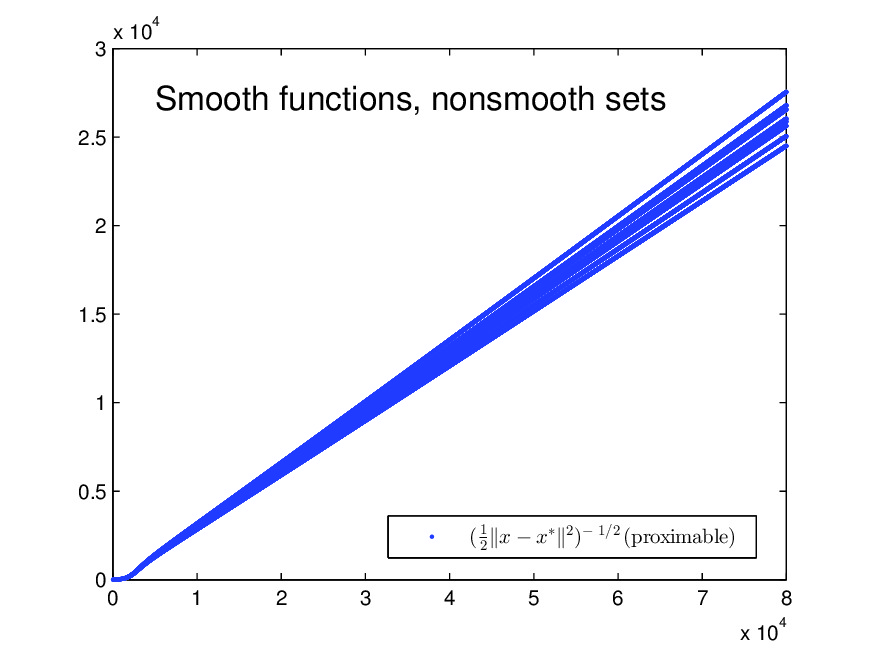}\includegraphics[scale=0.4]{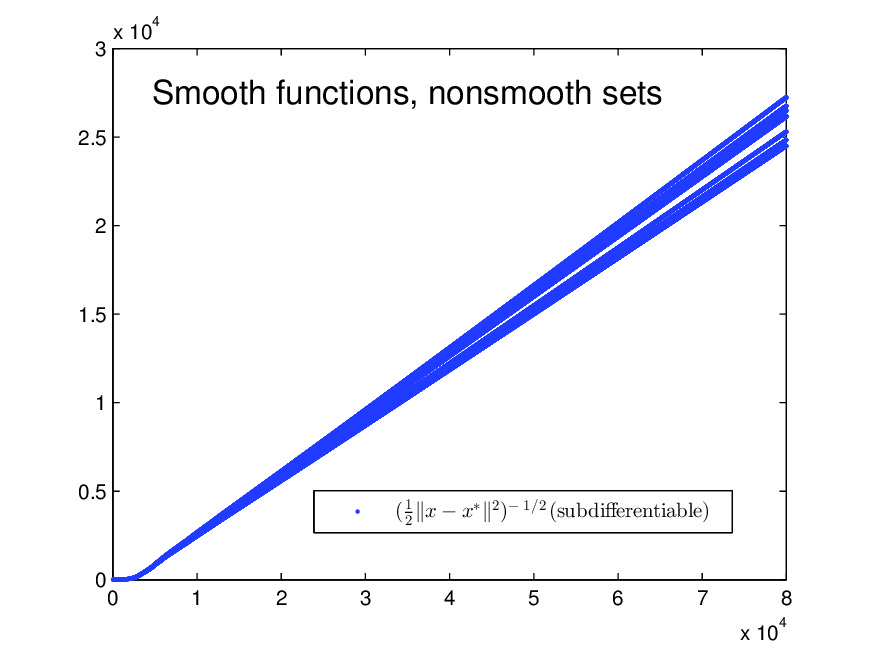}

\caption{\label{fig:exp-3}Plots for Experiment 3. }
\end{figure}

We now elaborate on Figure \ref{fig:exp-4}, which describes a typical
output from the fourth set of experiments. Similar $O(1/k)$ and $O(1/k^{2})$
rates for the convergence of the duality gap and $\frac{1}{2}\|x-x^{*}\|^{2}$
are observed, though our theory so far gives only $O(1/k^{1/3})$
for both quantities, just like what we saw for experiment 3. In the
second figure, the straight line and dashed line correspond to the
case when we treat the nonsmooth functions as proximable and subdifferentiable
functions respectively. Now that the functions $f_{i}(\cdot)$ are
nonsmooth functions, it is now noticeable that if the nonsmooth functions
were treated as proximable functions, the convergence of the duality
gap and $\frac{1}{2}\|x-x^{*}\|^{2}$ to zero is faster than if the
nonsmooth functions were treated as subdifferentiable functions. 

\begin{figure}[h]
\includegraphics[scale=0.4]{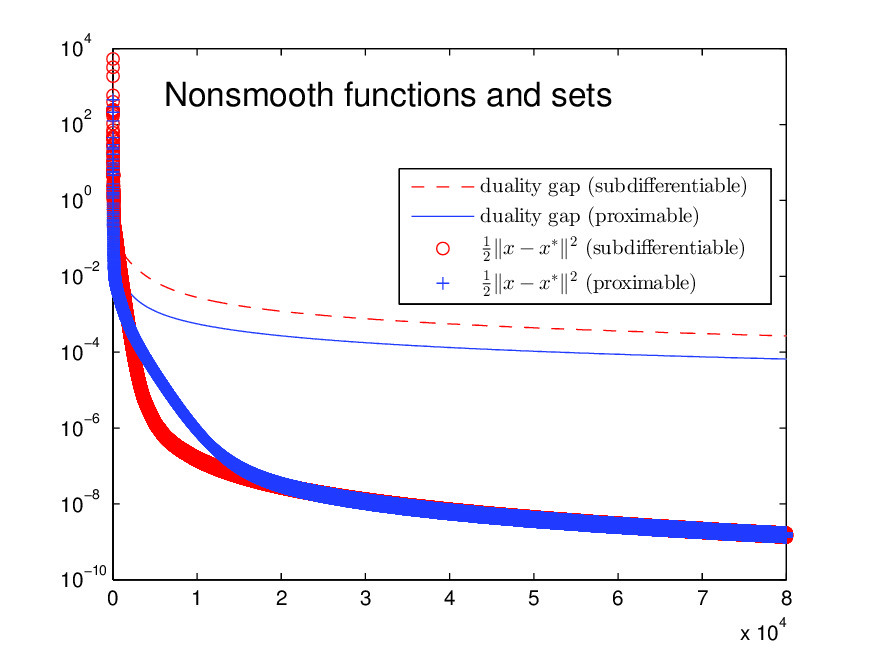}\includegraphics[scale=0.4]{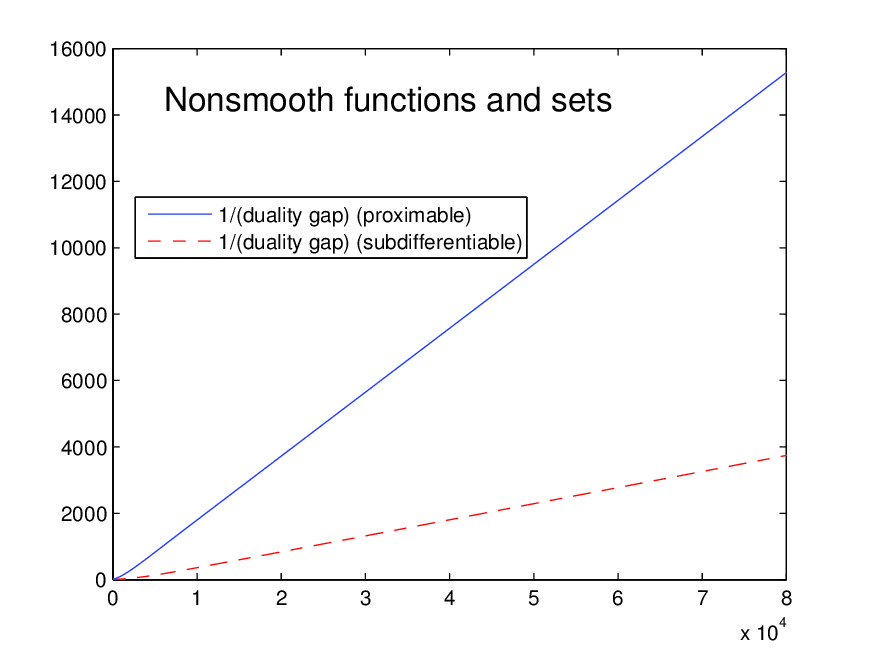}

\includegraphics[scale=0.4]{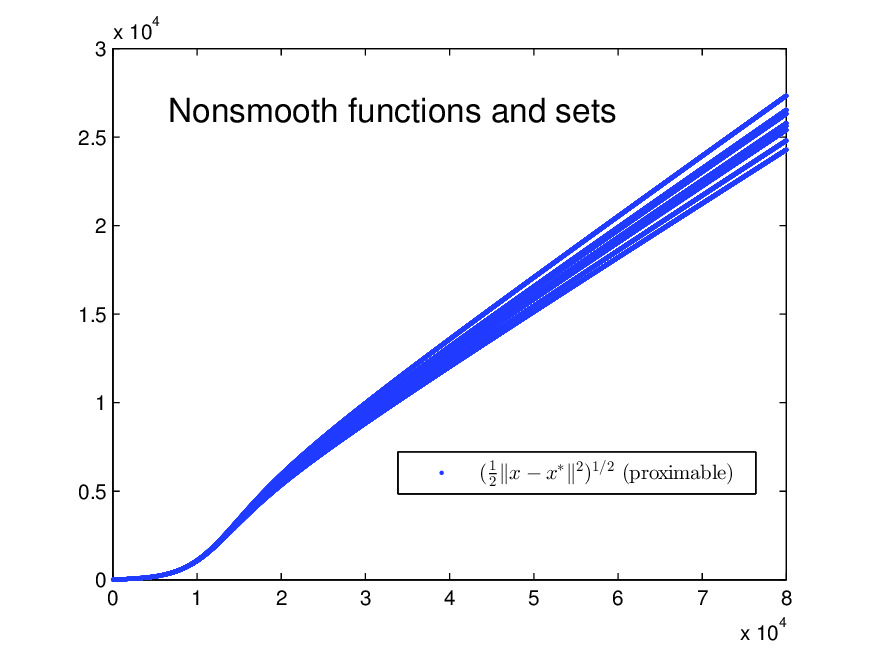}\includegraphics[scale=0.4]{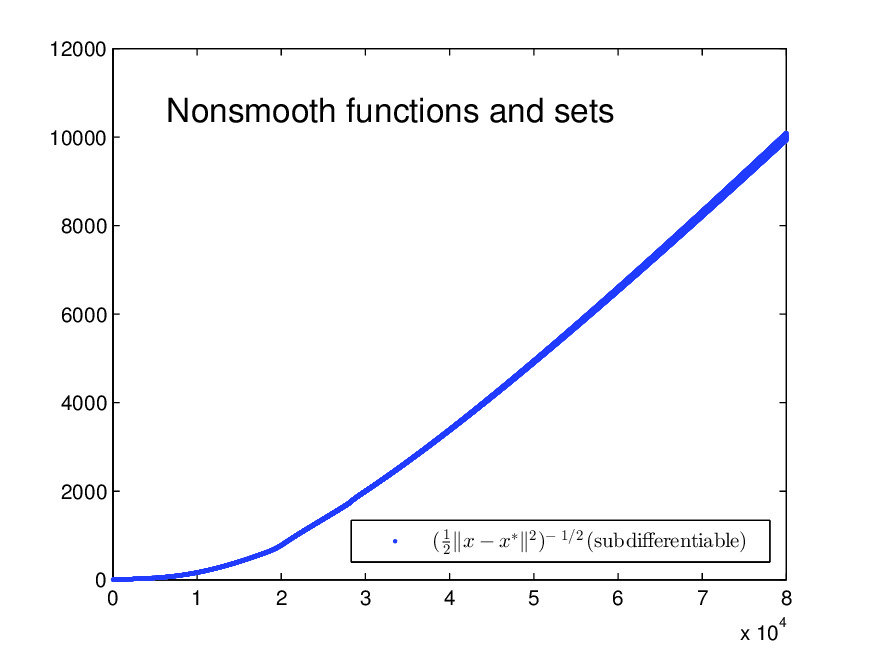}

\caption{\label{fig:exp-4}Plots for Experiment 4. }
\end{figure}
\bibliographystyle{amsalpha}
\bibliography{../refs}

\end{document}